\theoremstyle{plain}
\newtheorem{theorem}{Theorem}[section]
\newtheorem{lemma}[theorem]{Lemma}
\newtheorem{proposition}[theorem]{Proposition}
\newtheorem{corollary}[theorem]{Corollary}
\theoremstyle{definition}
\DeclareMathOperator{\N}{\mathbb{N}}
\DeclareMathOperator{\Z}{\mathbb{Z}}
\begin{document}
	
	
\title[On prime coprime graphs of certain finite groups]{On prime coprime graphs of certain finite groups}
	
\author[Ravi Ranjan]{\bfseries Ravi Ranjan}
\address{Department of Mathematics, Central University of South Bihar, Gaya--824236, Bihar, India}
\email{raviranjan23@cusb.ac.in}
\author[Shubh N. Singh]{\bfseries Shubh N. Singh}
\address{Department of Mathematics, Central University of South Bihar, Gaya--824236, Bihar, India}
\email{shubh@cub.ac.in}


\subjclass[2020]{05C25, 05C76}
\keywords{Prime coprime graph, Hamiltonian graph, Clique number, $H$-Join, Finite group}


	
\begin{abstract}
The prime coprime graph $\Theta(G)$ of a finite group $G$ is the graph whose vertex set is $G$ and any two distinct vertices are adjacent if the greatest common divisor of their orders is either $1$ or a prime. In this paper, we investigate Hamiltonicity, clique number, and vertex degree of $\Theta(G)$ for cyclic, dihedral, and dicyclic groups $G$. We establish that $\Theta(G)$ admits a $(k,1)$-partition for cyclic, dihedral, and dicyclic groups $G$ of specified orders.
\end{abstract}
	
\maketitle


\section{Introduction}

The study of graphs associated with groups has garnered significant interest in the last two decades; see \cite{camron-ijgt22}. Adhikari and Banerjee \cite{adhikari-nsjom22} recently introduced the \emph{prime coprime graph} $\Theta(G)$ of a finite group $G$ as the graph whose vertex set is $G$ and any two distinct vertices are adjacent if and only if the greatest common divisor (gcd) of their orders is either $1$ or a prime. This graph is alternatively known as the \emph{co-prime order graph} in some literature; see \cite{hao-2022, li-2022}. The \emph{coprime graph} of a finite group $G$, introduced in \cite{satta-09} as the \emph{order prime graph}, naturally forms a spanning subgraph of $\Theta(G)$.


\vspace{0.4mm}
Adhikari and Banerjee \cite{adhikari-nsjom22} characterized the completeness and Eulerian properties of $\Theta(G)$ for all finite groups $G$. Furthermore, they determined the domination number, diameter, and girth of $\Theta(G)$ for all finite groups $G$. In \cite{adhikari-nsjom22}, Hamiltonicity, planarity, vertex connectivity, and signless Laplacian spectra of $\Theta(G)$ were also investigated for specific finite groups $G$. Hao et al. \cite{hao-2022} classified all finite groups $G$ whose $\Theta(G)$ are planar. Furthermore, the vertex connectivity of $\Theta(G)$ 
was determined for finite cyclic, dihedral, and dicyclic groups $G$ \cite{hao-2022}. For a fixed integer $k\geq 1$, Li et al. \cite{li-2022} proved that there are finitely many finite groups $G$ whose $\Theta(G)$ have (non)orientable genus $k$. Many other interesting results on prime coprime graphs have been obtained in the literature; see \cite{li-2022, sehgal-jmcs21, sehgal-25, sehgal-ijpam2024, sehgal-jmcsc-21}.

\vspace{0.4mm}
The subsequent sections of this paper are organized as follows. In Section 2, we establish some notation and define key terminology related to graphs and groups.
In Section 3, We establish that $\Theta(G)$ admits a $(k,1)$-partition for cyclic, dihedral, and dicyclic groups $G$ of specified orders. In Section 4, we investigate Hamiltonicity of $\Theta(G)$ for every finite cyclic, dihedral, and dicyclic group $G$. In Sections 5 and 6, we determine the clique number and vertex degree of $\Theta(G)$, respectively, for groups $G$ belonging to the cyclic, dihedral, and dicyclic classes.


\section{Preliminaries and Notation}
 
Let $\N$ denote the set of all positive integers. Unless stated otherwise, assume $k, \ell, m, n\in \N$, and let $p$, $q$, $r$ denote distinct primes. We denote by $\gcd(a, b)$ the greatest common divisor (gcd) of $a, b\in \N$. The Euler's totient function of an integer $n\in \N$ is denoted by $\varphi(n)$. We denote the cardinality of a set $X$ by $|X|$. For any sets $A$ and $B$, let $A\setminus B \colonequals \{x\in A\colon x\notin B\}$. A \emph{partition} of a nonempty set $X$ is a family of nonempty, pairwise disjoint subsets of $X$ whose union is $X$. 


\vspace{0.4mm}

All graphs considered in this paper are finite, undirected, and simple. We denote by $K_n$ the complete graph, by $P_n$ the path, by $C_n$ the cycle, and by $E_n$ the graph with no edges, each one on $n$ vertices. Let $\Gamma$ be a graph. We use $V(\Gamma)$ and $E(\Gamma)$ to denote the vertex set and edge set of $\Gamma$, respectively. We use $uv$ to denote the edge of $\Gamma$ that joins the vertices $u$ and $v$ in $\Gamma$. The degree of a vertex $v\in V(\Gamma)$ is denoted by $\deg_{\Gamma}(v)$, and the minimum degree of $\Gamma$ is denoted by $\delta(\Gamma)$. The number of connected components of $\Gamma$ is denoted by $c(\Gamma)$. For a nonempty subset $U$ of $V(\Gamma)$, the subgraph of $\Gamma$ induced by $U$ is denoted by $\Gamma[U]$. The clique number and vertex connectivity of $\Gamma$ are denoted by $\omega(\Gamma)$ and $\kappa(\Gamma)$, respectively. A vertex of $\Gamma$ is a {\emph dominating vertex} if it is adjacent to every other vertex in $\Gamma$.  Let $\Gamma_1$ and $\Gamma_2$ be two vertex-disjoint subgraphs of $\Gamma$. We write $\Gamma_1 \sim \Gamma_2$ to denote that every vertex of $\Gamma_1$ is adjacent to every vertex of $\Gamma_2$ in the graph $\Gamma$. If there is no edge in $\Gamma$ that joins a vertex in $\Gamma_1$ to a vertex in $\Gamma_2$, then we say that $\Gamma_1$ is \emph{independent} of $\Gamma_2$. The notation $\Gamma \cong \Gamma'$ denotes that the graphs $\Gamma$ and $\Gamma'$ are isomorphic. The \emph{join} of any two vertex-disjoint graphs $\Gamma$ and $\Gamma'$, denoted by $\Gamma \vee \Gamma'$, is the graph whose vertex and edge sets are $V(\Gamma)\cup V(\Gamma')$ and $E(\Gamma)\cup E(\Gamma') \cup \{uv\colon u\in V(\Gamma), v\in V(\Gamma')\}$, respectively.

\vspace{0.4mm}
All groups considered in this paper are  finite. The cyclic group of order $n$ is denoted by $\Z_n = \{0,1,\ldots, n-1\}$. Let $G$ be a group. An element of order $2$ in $G$ is an \emph{involution} of $G$. For an element $g\in G$, we write $|g|$ for the order of $g$ and $\langle g \rangle$ for the subgroup generated by $g$.  We use $H \leq G$ to denote that $H$ is a subgroup of $G$. Let $S(G) \colonequals \{g\in G\colon|g| \text{ is either } 1 \text{ or a prime}\}$ and $T(G) \colonequals G \setminus S(G)$. A group is an \emph{EPO-group} if all of its nonidentity elements are of prime order.



\vspace{0.4mm}

For terminology and notation pertaining to graphs, groups, and number theory not defined herein, we refer the reader to \cite{west-b01}, \cite{gallian-b21}, and \cite{burton-b07}, respectively. To conclude this section, we present several established results from the literature.

\begin{theorem}\cite[Theorem~4.4]{gallian-b21}\label{th_numb-of-elem-cyclic-grp}
If $d$ is a positive divisor of $n$, then the number of elements of
order $d$ in a cyclic group of order $n$ is $\varphi(d)$.
\end{theorem}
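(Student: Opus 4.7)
The plan is to work inside a fixed cyclic group $G = \langle g \rangle$ of order $n$ and exploit the well-known fact that for each divisor $d$ of $n$, the group $G$ contains exactly one subgroup of order $d$. First I would identify this subgroup explicitly as $H = \langle g^{n/d} \rangle$; a direct application of the order formula in a cyclic group shows that $|g^{n/d}| = n/\gcd(n/d, n) = d$, so $|H| = d$.

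Next I would argue that every element of $G$ of order $d$ must actually lie in $H$. If $x \in G$ has $|x| = d$, then $\langle x \rangle$ is a cyclic subgroup of order $d$, and by the uniqueness of the order-$d$ subgroup it must coincide with $H$. Consequently, the elements of order $d$ in $G$ are \emph{precisely} the generators of $H$, and the problem is reduced to counting generators of a cyclic group of order $d$.

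To carry out that count, I would again use the order formula: writing $h = g^{n/d}$, the element $h^k$ has order $d/\gcd(k, d)$, so $h^k$ generates $H$ if and only if $\gcd(k, d) = 1$. The number of integers $k$ with $1 \leq k \leq d$ satisfying this condition is $\varphi(d)$ by definition of the Euler totient, which yields the desired count.

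The only real technical ingredient is the uniqueness of the subgroup of order $d$ inside a cyclic group; if one wishes to avoid invoking it as a black box, one can prove it as an intermediate step via the standard correspondence between subgroups of $\langle g \rangle$ and divisors of $n$. Beyond that, the argument is a short bookkeeping exercise built on the single formula $|g^k| = n/\gcd(k,n)$, so I do not anticipate any substantive obstacle.
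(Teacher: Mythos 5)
Your proof is correct and is the standard argument (unique subgroup of order $d$, elements of order $d$ are exactly its generators, count generators via $|g^k| = n/\gcd(k,n)$); the paper itself gives no proof, citing this as Theorem~4.4 of Gallian, whose proof proceeds in essentially the same way.
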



\begin{theorem}\cite[Theorem~7.3]{burton-b07}\label{th_euler-phi-formula}
If $n = p_1^{k_1} p_2^{k_2} \cdots p_t^{k_t}$ is the canonical factorization of an integer $n>1$, then 
\[\varphi(n) = \left(p_1^{k_1}- p_1^{k_1-1}\right) \left(p_2^{k_2}- p_2^{k_2-1}\right) \cdots \left(p_t^{k_t}- p_t^{k_t-1}\right).\]
\end{theorem}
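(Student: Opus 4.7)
The plan is to split the computation into two classical steps: first evaluate $\varphi$ on a single prime power, then prove multiplicativity of $\varphi$ on coprime arguments, and finally combine the two. Since $n = \prod_{i=1}^{t} p_i^{k_i}$ expresses $n$ as a product of pairwise coprime prime powers, iterated multiplicativity reduces the problem to the values $\varphi(p_i^{k_i})$.

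For the prime-power case, I would argue directly by counting. Among the integers $1, 2, \ldots, p^k$, an integer is coprime to $p^k$ exactly when it is not divisible by $p$. The multiples of $p$ in this range are $p, 2p, \ldots, p^{k-1}\cdot p$, giving $p^{k-1}$ exclusions. Hence $\varphi(p^k) = p^k - p^{k-1}$, which is precisely the factor appearing in the statement.

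For multiplicativity, I would assume $\gcd(m, n) = 1$ and invoke the Chinese Remainder Theorem to obtain a ring isomorphism $\Z_{mn} \cong \Z_m \times \Z_n$. A residue class is a unit in $\Z_{mn}$ if and only if both of its coordinates are units in $\Z_m$ and $\Z_n$, so counting units on each side yields $\varphi(mn) = \varphi(m)\varphi(n)$. An alternative that avoids ring theory is to arrange $1, 2, \ldots, mn$ in an $m \times n$ grid indexed by residues modulo $m$ and $n$, and to show by an elementary row-and-column argument that exactly $\varphi(m)\varphi(n)$ of the entries are coprime to $mn$.

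Combining the two pieces yields
\[
\varphi(n) = \prod_{i=1}^{t} \varphi(p_i^{k_i}) = \prod_{i=1}^{t} \bigl( p_i^{k_i} - p_i^{k_i-1} \bigr),
\]
as required. The main obstacle is the multiplicativity claim: the prime-power count is essentially a one-line observation, but $\varphi(mn) = \varphi(m)\varphi(n)$ genuinely requires either a structural appeal to the Chinese Remainder Theorem or a careful combinatorial argument tracking coprimality across both coordinates simultaneously.
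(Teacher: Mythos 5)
Your argument is correct and is essentially the standard proof: the paper itself cites this result from Burton without reproving it, and Burton's own proof proceeds exactly as you describe, namely $\varphi(p^k)=p^k-p^{k-1}$ by counting multiples of $p$, multiplicativity of $\varphi$ on coprime arguments (Burton uses the $m\times n$ array argument you mention as the alternative), and then combining over the canonical factorization. No gaps.
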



\begin{theorem} \cite[Theorem~3.1(b)]{adhikari-nsjom22}\label{th_singletion-dominating-set}
Let $G$ be a group and $g\in G$. Then $g$ is a dominating vertex of $\Theta(G)$ if and only if $|g|$ is either $1$ or a prime. 
\end{theorem}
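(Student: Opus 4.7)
The plan is to prove the biconditional by handling each direction separately, with the reverse implication being essentially definitional and the forward direction requiring a single well-chosen witness.

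For the direction ($\Leftarrow$), I would assume $|g|\in\{1\}\cup\{\text{primes}\}$ and let $h$ be any vertex distinct from $g$. If $|g|=1$, then $\gcd(|g|,|h|)=1$. If $|g|=p$ is prime, then $\gcd(p,|h|)\in\{1,p\}$ since the only positive divisors of $p$ are $1$ and $p$. In either case the gcd is $1$ or a prime, so by definition of $\Theta(G)$ the vertices $g$ and $h$ are adjacent. Since $h$ was arbitrary, $g$ is a dominating vertex.

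For the direction ($\Rightarrow$), I would argue the contrapositive: suppose $|g|$ is neither $1$ nor a prime, so that $n\colonequals|g|$ is composite and in particular $n\geq 4$. I want to exhibit a single vertex $h\neq g$ to which $g$ is not adjacent. The natural candidate is $h=g^{-1}$. Since $n\geq 4>2$, we have $g\neq g^{-1}$ (because $g=g^{-1}$ forces $g^2=e$, i.e.\ $|g|\in\{1,2\}$), so $h$ is a legitimate distinct vertex. Moreover $|g^{-1}|=|g|=n$, so $\gcd(|g|,|h|)=n$, which by assumption is composite---neither $1$ nor a prime. Hence $gh\notin E(\Theta(G))$, and $g$ is not a dominating vertex.

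The main obstacle, such as it is, lies only in the forward direction and amounts to choosing the witness element cleverly: any vertex $h$ whose order shares a composite factor with $|g|$ would work, but $g^{-1}$ is convenient because it is guaranteed to exist in $G$ without any group-theoretic hypotheses, has the same order as $g$, and is distinct from $g$ precisely when $|g|>2$---which is automatic once $|g|$ is composite. With that choice in hand the argument is a one-line gcd computation, so I expect the entire proof to fit comfortably in a few lines.
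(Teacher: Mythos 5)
Your proof is correct and complete: the ($\Leftarrow$) direction is the immediate observation that any divisor of $1$ or of a prime $p$ is $1$ or $p$, and the ($\Rightarrow$) direction is settled by the witness $h=g^{-1}$, which is distinct from $g$ once $|g|\geq 4$ and shares the composite order $|g|$ with $g$. Note that the paper does not supply its own proof of this statement --- it is quoted as Theorem~3.1(b) of Adhikari and Banerjee --- so there is nothing in the paper to compare against; your argument is the standard one and would serve as a self-contained justification.
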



\begin{theorem} \cite[Theorem~3.5]{adhikari-nsjom22} \label{th_group-EPO-iff-complete}
Let $G$ be a group of order $n$. Then $\Theta(G)\cong K_n$ if and only if $G$ is an EPO-group.
\end{theorem}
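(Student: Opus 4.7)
The plan is to deduce this almost immediately from Theorem~\ref{th_singletion-dominating-set}. The key observation is that a simple graph on $n$ vertices is $K_n$ if and only if every vertex of it is a dominating vertex. So $\Theta(G)\cong K_n$ is equivalent to the statement that every $g\in G$ is a dominating vertex of $\Theta(G)$. By Theorem~\ref{th_singletion-dominating-set}, the latter is equivalent to $|g|$ being $1$ or a prime for every $g\in G$. Since the identity is the unique element of order $1$, this condition amounts to every nonidentity element of $G$ having prime order, which is precisely the definition of an EPO-group.

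If one prefers a self-contained argument that does not invoke Theorem~\ref{th_singletion-dominating-set}, the forward direction is direct: assuming $G$ is EPO, take any two distinct $x,y\in G$. If one of them is the identity, then $\gcd(|x|,|y|)=1$; otherwise both $|x|$ and $|y|$ are primes, and so $\gcd(|x|,|y|)$ is either $1$ (when the primes differ) or the common prime. In every case $x$ and $y$ are adjacent in $\Theta(G)$, so $\Theta(G)\cong K_n$.

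For the reverse direction without Theorem~\ref{th_singletion-dominating-set}, the approach is to assume $\Theta(G)\cong K_n$ and show, by contrapositive, that any nonidentity $g\in G$ with composite order produces a non-edge. Write $|g|=m$ and suppose $m$ is neither $1$ nor a prime; then $m$ admits a divisor $d$ with $1<d<m$ such that $\gcd(m,d)=d$ is still composite, or one can choose a power $g^j\in\langle g\rangle$ of appropriate order so that $\gcd(|g|,|g^j|)$ is not $1$ or a prime. Exhibiting such a power is elementary: for instance, if $m=p^a$ with $a\geq 2$, take $g^p$, whose order is $p^{a-1}$, giving $\gcd=p^{a-1}$; if $m$ has at least two distinct prime divisors, take a suitable proper power whose order is $m/p$ for some prime $p\mid m$, yielding a composite gcd. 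This contradicts $\Theta(G)\cong K_n$, so every nonidentity element must have prime order.

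I expect no serious obstacle here; the statement is a short corollary of the dominating-vertex characterization. The only minor subtlety in the self-contained version is selecting the right power of $g$ to produce the required non-edge, which is straightforward via the subgroup structure of $\langle g\rangle$ together with Theorem~\ref{th_numb-of-elem-cyclic-grp}.
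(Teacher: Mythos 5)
This statement is imported from the cited reference \cite{adhikari-nsjom22} and the present paper offers no proof of its own, so there is nothing internal to compare against; your first argument --- that a graph on $n$ vertices is complete exactly when every vertex is dominating, so Theorem~\ref{th_singletion-dominating-set} reduces the claim to the definition of an EPO-group --- is correct, complete, and surely the intended derivation. I will, however, flag a concrete error in your optional self-contained version of the reverse direction: the witnesses you exhibit fail in the smallest cases. If $|g|=p^{2}$, then $g^{p}$ has order $p$ and $\gcd(p^{2},p)=p$ is prime, so $g$ and $g^{p}$ \emph{are} adjacent; likewise, if $|g|=pq$, an element of order $m/p=q$ gives $\gcd(pq,q)=q$, again prime, so no non-edge is produced. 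The repair is immediate and makes the case split unnecessary: since $m=|g|$ is composite, $m>2$, hence $g\neq g^{-1}$, both elements have order $m$, and $\gcd(m,m)=m$ is composite, so $g$ and $g^{-1}$ form the required non-edge (equivalently, $\varphi(m)\geq 2$, so $\langle g\rangle$ contains two distinct elements of order $m$). With that substitution the self-contained argument also goes through.
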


\newpage
\section{$(k, 1)$-graphs}
For any two nonnegative integers $k, \ell$ with $k+\ell > 0$, recall that a graph is a \emph{$(k, \ell)$-graph} if its vertex set admits a partition into $k$ independent sets and $\ell$ cliques; see \cite[p.~48]{brand-dm96}. Such a partition is called a \emph{$(k, \ell)$-partition} of the graph. In particular, a $(1,1)$-graph is known as a \emph{split graph}; see \cite[p.~667]{hammer-cjm77}.

\vspace{0.5mm}
This section comprises three subsections, each proving that the graph $\Theta(G)$ admits a $(k,1)$-partition for specific cyclic, dihedral, and dicyclic groups $G$. Before that, we recall the definition of $H$-join of vertex-disjoint graphs, as delineated in \cite{cardoso-dm2013}. Let $H$ be a graph with $V(H) = \{1, 2, \ldots, k\}$, where $k\geq 2$, and let $\{\Gamma_1, \Gamma_2, \ldots, \Gamma_k\}$ be a family of pairwise vertex-disjoint graphs. The \emph{$H$-join} of $\Gamma_1, \Gamma_2, \ldots, \Gamma_k$, denoted by $H[\Gamma_1, \Gamma_2, \ldots, \Gamma_k]$, is the graph whose vertex and edge sets are
\[ \bigcup_{i =1}^k V(\Gamma_i)  \quad\text{ and }\quad \Big(\displaystyle\bigcup_{i =1}^k E(\Gamma_i)\Big) \cup \Big( \displaystyle\bigcup_{ij\in E(H)} \{uv\colon u\in V(\Gamma_i), v\in V(\Gamma_j)\}\Big),\]  
respectively. It is clear that $\Gamma_1 \vee \Gamma_2 = P_2[\Gamma_1, \Gamma_2]$.

%

\vspace{0.4mm}
By Theorem~\ref{th_singletion-dominating-set}, the graph $\Theta(G)$ admits a $(0, 1)$-partition for every group $G$ of prime order.

\vspace{0.4mm}

In the following proposition, we establish that the graph $\Theta(G)$ of any group $G$ of order $pq$ admits either a $(1,1)$-partition or a $(0,1)$-partition depending on $G$ is cyclic or noncyclic, respectively.

\begin{proposition}\label{pr_group-of-2-distnct-prime}
If $G$ is a group of order $pq$, then
	\begin{equation*}
		\Theta(G)\cong
		\begin{cases}
			K_{p+q-1}\vee E_{(p-1)(q-1)} & \text{if $G$ is cyclic},\\
			K_{pq} & \text{if $G$ is noncyclic}.
		\end{cases}
	\end{equation*}
\end{proposition}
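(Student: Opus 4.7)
The plan is to split on whether $G$ is cyclic or not and reduce each case to the results already recorded in the excerpt (Theorems~\ref{th_numb-of-elem-cyclic-grp}, \ref{th_euler-phi-formula}, \ref{th_singletion-dominating-set}, \ref{th_group-EPO-iff-complete}). The key observation driving both cases is that an element of $G$ either has order $1$, $p$, $q$, or $pq$, and only the last kind fails to be a dominating vertex of $\Theta(G)$ by Theorem~\ref{th_singletion-dominating-set}.

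For the cyclic case, I would first use Theorems~\ref{th_numb-of-elem-cyclic-grp} and \ref{th_euler-phi-formula} to count the elements of $G\cong \Z_{pq}$ by order: exactly $1$ element of order $1$, $p-1$ of order $p$, $q-1$ of order $q$, and $(p-1)(q-1)$ of order $pq$. Thus $|S(G)|=p+q-1$ and $|T(G)|=(p-1)(q-1)$. By Theorem~\ref{th_singletion-dominating-set}, every vertex of $S(G)$ is a dominating vertex of $\Theta(G)$, so $\Theta(G)[S(G)]\cong K_{p+q-1}$ and $S(G)\sim T(G)$. Finally, for any two distinct $u,v\in T(G)$ we have $|u|=|v|=pq$, so $\gcd(|u|,|v|)=pq$, which is neither $1$ nor a prime; hence $\Theta(G)[T(G)]\cong E_{(p-1)(q-1)}$. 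Collecting these facts yields $\Theta(G)\cong K_{p+q-1}\vee E_{(p-1)(q-1)}$.

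For the noncyclic case, the goal is to apply Theorem~\ref{th_group-EPO-iff-complete}, so it suffices to show that $G$ is an EPO-group. Every nonidentity element of $G$ has order dividing $pq$, i.e.\ order $p$, $q$, or $pq$; however, the presence of an element of order $pq$ would force $G$ to be cyclic, contradicting our assumption. Therefore every nonidentity element has prime order, $G$ is an EPO-group, and Theorem~\ref{th_group-EPO-iff-complete} gives $\Theta(G)\cong K_{pq}$.

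There is no real obstacle here; the argument is essentially bookkeeping once the correct results are assembled. The one place that requires a moment of care is the noncyclic case: one must note explicitly that having an element of order $pq$ is equivalent to being cyclic in a group of order $pq$, in order to rule it out and establish the EPO property.
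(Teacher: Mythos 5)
Your proof is correct and follows essentially the same route as the paper's: the noncyclic case via Theorem~\ref{th_group-EPO-iff-complete} and the cyclic case via the element-order counts together with Theorems~\ref{th_singletion-dominating-set} and \ref{th_numb-of-elem-cyclic-grp}. The only difference is that you spell out why a noncyclic group of order $pq$ is an EPO-group, a step the paper asserts without comment.
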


\begin{proof}[\textbf{Proof}]
Assume that $G$ is a group of order $pq$. If $G$ is noncyclic, then $G$ is an EPO-group. Therefore $\Theta(G)\cong K_{pq}$ by Theorem~\ref{th_group-EPO-iff-complete}. So, assume further that $G$ is cyclic. By combining Theorems~\ref{th_numb-of-elem-cyclic-grp} and \ref{th_euler-phi-formula}, we obtain $|S(G)| = p+q-1$ and $|T(G)| = (p-1)(q-1)$. Now observe that the gcd of the orders of any two distinct elements in $T(G)$ is a composite number, and so $\Theta(G)[T(G)] \cong E_{(p-1)(q-1)}$. By Theorem~\ref{th_singletion-dominating-set}, we thus obtain $\Theta(G)[S(G)] \cong K_{p+q-1}$ and $\Theta(G)[S(G)] \sim \Theta(G)[T(G)]$. Hence $\Theta(G)\cong K_{p+q-1}\vee E_{(p-1)(q-1)}$.
\end{proof}


\subsection{Cyclic Groups}
In this subsection, we prove that the graph $\Theta(\Z_n)$ admits the structure of a $(k, 1)$-graph for each $n \in \{p^m,\ pq^m,\ p^\ell q^m,\ pqr\}$, where $\ell, m \geq 2$.

\begin{proposition}\label{pr_H-join-cyclic-pm}
For all $m\geq 2$, $\Theta(\Z_{p^m})\cong K_{p}\vee E_{p^m-p}$. Consequently, $\Theta(\Z_{p^m})$ is a split graph.
\end{proposition}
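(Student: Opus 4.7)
The plan is to mimic the proof strategy of Proposition~\ref{pr_group-of-2-distnct-prime}: identify $S(\Z_{p^m})$ and $T(\Z_{p^m})$, show the induced subgraphs on these sets are $K_p$ and $E_{p^m-p}$ respectively, and invoke Theorem~\ref{th_singletion-dominating-set} to glue them together as a join.

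First, I would count the elements of $S(\Z_{p^m})$ and $T(\Z_{p^m})$. Since every element of $\Z_{p^m}$ has order a divisor of $p^m$, the possible orders are $1, p, p^2, \ldots, p^m$. By Theorem~\ref{th_numb-of-elem-cyclic-grp}, the number of elements of order $p^i$ equals $\varphi(p^i)$, so
\[ |S(\Z_{p^m})| = \varphi(1) + \varphi(p) = 1 + (p-1) = p, \]
and consequently $|T(\Z_{p^m})| = p^m - p$.

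Next I would analyze the induced subgraph on $T(\Z_{p^m})$. Any two distinct elements $a, b \in T(\Z_{p^m})$ have orders $p^i$ and $p^j$ with $i, j \geq 2$, so $\gcd(|a|, |b|) = p^{\min(i,j)} \geq p^2$, which is neither $1$ nor a prime. Hence no two vertices of $T(\Z_{p^m})$ are adjacent in $\Theta(\Z_{p^m})$, giving $\Theta(\Z_{p^m})[T(\Z_{p^m})] \cong E_{p^m - p}$. Then Theorem~\ref{th_singletion-dominating-set} immediately gives both $\Theta(\Z_{p^m})[S(\Z_{p^m})] \cong K_p$ (every pair in $S$ is joined because both are dominating vertices) and $\Theta(\Z_{p^m})[S(\Z_{p^m})] \sim \Theta(\Z_{p^m})[T(\Z_{p^m})]$. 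Combining these three facts yields $\Theta(\Z_{p^m}) \cong K_p \vee E_{p^m - p}$.

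For the consequence, I would note that the partition $V(\Theta(\Z_{p^m})) = S(\Z_{p^m}) \sqcup T(\Z_{p^m})$ is by construction a $(1,1)$-partition: $S(\Z_{p^m})$ induces the clique $K_p$ and $T(\Z_{p^m})$ is an independent set of size $p^m - p$. By definition, this makes $\Theta(\Z_{p^m})$ a split graph. There is no real obstacle here; the proof is essentially a direct application of Theorems~\ref{th_numb-of-elem-cyclic-grp} and \ref{th_singletion-dominating-set} plus the elementary gcd observation on prime powers.
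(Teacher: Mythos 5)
Your proof is correct and follows essentially the same route as the paper's: compute $|S(\Z_{p^m})|=p$ and $|T(\Z_{p^m})|=p^m-p$ via Theorems~\ref{th_numb-of-elem-cyclic-grp} and \ref{th_euler-phi-formula}, observe that the gcd of the orders of any two elements of $T(\Z_{p^m})$ is a composite prime power, and invoke Theorem~\ref{th_singletion-dominating-set} to obtain the join structure. Your explicit justification that $\gcd(|a|,|b|)=p^{\min(i,j)}\ge p^2$ is just a spelled-out version of the paper's ``composite number'' observation, so there is nothing to correct.
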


\begin{proof}[\textbf{Proof}]
Assume that $m\geq 2$, and set $n= p^m$. By combining Theorems~\ref{th_numb-of-elem-cyclic-grp} and \ref{th_euler-phi-formula}, we obtain $|S(\Z_{n})| = p$ and $|T(\Z_{n})|=n-p$. Now observe that the gcd of the orders of any two distinct elements in $T(\Z_{n})$ is a composite number, and so $\Theta(\Z_{n})[T(\Z_{n})] \cong E_{n-p}$. By Theorem~\ref{th_singletion-dominating-set}, we thus obtain $\Theta(\Z_{n})[S(\Z_{n})] \cong K_{p}$ and $\Theta(\Z_{n})[S(\Z_{n})] \sim  \Theta(\Z_{n})[T(\Z_{n})]$. Hence $\Theta(\Z_{n})\cong K_{p}\vee E_{n-p}$.
\end{proof}

\begin{proposition}\label{pr_H-join-cyclic-pqm}
For all $m \geq 2$, 
\[\Theta(\Z_{pq^m})\cong H[K_{p+q-1},E_{q^m-q},E_{(p-1)(q-1)},E_{(p-1)(q^m-q)}],\] where $H$ is given in Figure~\ref{graph-H-of-cyclic-pqm}. 
Consequently, $\Theta(\Z_{pq^m})$ is a $(3,1)$-graph.
	\vspace{-2mm}
	\begin{figure}[h]
		\begin{tikzpicture}
			\draw[fill=black] (0,0) circle (1.2pt);
			\draw[fill=black] (-0.7,0) circle (1.2pt);
			\draw[fill=black] (0.8,0.4) circle (1.2pt);
			\draw[fill=black] (0.8,-0.4) circle (1.2pt);
			
			\draw[line width=0.6 pt] (-0.7,0) -- (0,0) -- (0.8,0.4)--(0.8, -0.4)--(0,0);
			
			\node at (0,0.2) {$1$};
			\node at (0.95, 0.4) {$2$};
			\node at (0.95, -0.4) {$3$};
			\node at (-0.7, 0.2) {$4$};
		\end{tikzpicture}	
		\vspace{-3mm}
		\caption{Graph $H$}	\label{graph-H-of-cyclic-pqm}
	\end{figure}
\end{proposition}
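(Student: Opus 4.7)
The plan is to realize the $H$-join by partitioning $\Z_{pq^m}$ into four classes according to element orders, with each class corresponding to one vertex of $H$. Since the divisors of $pq^m$ are exactly $q^i$ and $pq^i$ for $0\le i\le m$, every element of $\Z_{pq^m}$ falls into one of four natural buckets: $V_1$ consisting of elements of order in $\{1,p,q\}$; $V_2$ of elements of order $q^i$ with $i\ge 2$; $V_3$ of elements of order $pq$; and $V_4$ of elements of order $pq^i$ with $i\ge 2$. I assign $V_i$ to vertex $i$ of $H$.

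The first step is to check that the sizes match the prescribed vertex counts. By Theorems~\ref{th_numb-of-elem-cyclic-grp} and \ref{th_euler-phi-formula}, summing $\varphi(d)$ over the relevant divisors gives $|V_1| = 1 + (p-1) + (q-1) = p+q-1$, $|V_2| = \sum_{i=2}^m (q^i - q^{i-1}) = q^m - q$, $|V_3| = (p-1)(q-1)$, and $|V_4| = \sum_{i=2}^m (p-1)(q^i - q^{i-1}) = (p-1)(q^m-q)$, agreeing with the orders of the four graphs on the right-hand side.

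The second step is to identify each induced subgraph and each between-block adjacency by a single gcd computation. By Theorem~\ref{th_singletion-dominating-set}, every element of $V_1$ is a dominating vertex of $\Theta(\Z_{pq^m})$, so $\Theta(\Z_{pq^m})[V_1]\cong K_{p+q-1}$ and $V_1$ is adjacent in bulk to each of $V_2, V_3, V_4$, producing the edges $12, 13, 14$ of $H$. Inside $V_2$ the gcd of any two orders is $q^{\min(i,j)}\ge q^2$; inside $V_3$ it equals $pq$; and inside $V_4$ it equals $pq^{\min(i,j)}$. All three are composite, so the three induced subgraphs are empty graphs of the correct orders. Between $V_2$ and $V_3$, $\gcd(q^i,pq)=q$ is prime, so $V_2\sim V_3$, yielding the edge $23$ of $H$. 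Between $V_2$ and $V_4$, $\gcd(q^i,pq^j)=q^{\min(i,j)}$ is composite; between $V_3$ and $V_4$, $\gcd(pq,pq^j)=pq$ is composite. Hence $V_2$ is independent of $V_4$ and $V_3$ is independent of $V_4$, so the only missing edges $24$ and $34$ of $H$ correspond precisely to the only missing cross-adjacencies.

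There is no real obstacle: the argument reduces to choosing the partition and performing six gcd computations (three within blocks, three between pairs of non-$V_1$ blocks). The only thing requiring mild care is the hypothesis $m\ge 2$, which ensures that $V_2$ and $V_4$ are nonempty and that the gcds inside $V_2$ and between $V_2,V_4$ are genuinely composite; for $m=1$ the group is $\Z_{pq}$, already handled by Proposition~\ref{pr_group-of-2-distnct-prime}.
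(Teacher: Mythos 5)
Your proposal is correct and follows essentially the same route as the paper: the identical four-block partition by element order, the same counts via Theorems~\ref{th_numb-of-elem-cyclic-grp} and \ref{th_euler-phi-formula}, the dominating-vertex theorem for the block of prime-or-identity orders, and the same gcd checks for the remaining within- and between-block adjacencies. The only difference is cosmetic: you write out the gcd computations explicitly where the paper merely says ``observe.''
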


\begin{proof}[\textbf{Proof}]
Assume that $m\geq 2$. Set $n = pq^m$, and define
	\begin{align*}
		S_1&= S(\Z_{n}) & S_3&=\{g\in \Z_n \colon |g|=pq\}\\
		S_2&=\{g\in \Z_n \colon |g|=q^j,\; 2\leq j \leq  m\}
		  & S_4&=\{g\in \Z_n \colon |g|=pq^j,\; 2\leq j \leq  m\}.
	\end{align*}
By construction, the collection $\{S_1, S_2, S_3, S_4\}$ forms a partition of $\Z_{n}$. By combining Theorems~\ref{th_numb-of-elem-cyclic-grp} and \ref{th_euler-phi-formula}, we obtain $|S_1|=p+q-1$, $|S_2|=q^m-q$, $|S_3|=(p-1)(q-1)$, and $|S_4|=(p-1)(q^m-q)$. Now observe for each $i \in \{2, 3, 4\}$ that the gcd of the orders of any two distinct elements in $S_i$ is a composite number. Therefore $\Gamma_2 \colonequals \Theta(\Z_{n})[S_2]\cong E_{q^m-q}$, $\Gamma_3 \colonequals \Theta(\Z_{n})[S_3]\cong E_{(p-1)(q-1)}$, and $\Gamma_4 \colonequals \Theta(\Z_{n})[S_4]\cong E_{(p-1)(q^m-q)}$. By Theorem~\ref{th_singletion-dominating-set}, we thus obtain $\Gamma_1 \colonequals \Theta(\Z_{n})[S_1]\cong K_{p+q-1}$ and $\Gamma_1 \sim \Gamma_i$ for each $i\in \{2,3,4\}$. Furthermore, observe that $\Gamma_2 \sim \Gamma_3$. Finally, observe that $\Gamma_4$ is independent of both $\Gamma_2$ and $\Gamma_3$. Hence $\Theta(\Z_n)\cong H[\Gamma_1, \Gamma_2, \Gamma_3, \Gamma_4]$, where $H$ is given in Figure~\ref{graph-H-of-cyclic-pqm}.
\end{proof}


\begin{proposition}\label{pr_H-join-cyclic-pmqn}
For all $\ell, m \ge 2$, the graph $\Theta(\Z_{p^\ell q^m})$ is isomorphic to the $H$-join \[H[K_{p+q-1}, E_{p^\ell-p}, E_{q^m-q}, E_{(p-1)(q-1)}, E_{(p^\ell-p)(q-1)}, E_{(q^m-q)(p-1)}, E_{(p^\ell-p)(q^m-q)}],\] where $H$ is given in Figure~\ref{graph-H-of-cyclic-pmqn}. 
Consequently, $\Theta(\Z_{p^\ell q^m})$ is a $(6,1)$-graph.
	\vspace{-2mm}
	\begin{figure}[h]
		\begin{tikzpicture}[very thick]
			
			\draw[fill=black] (0,0) circle (1.2pt);
			\draw[fill=black] (2,-0.75) circle (1.2pt);
			\draw[fill=black] (2,0.75) circle (1.2pt);
			\draw[fill=black] (1,0) circle (1.2pt);
			\draw[fill=black] (-1,0.75) circle (1.2pt);
			\draw[fill=black] (-1,-0.75) circle (1.2pt);
			\draw[fill=black] (-1,0) circle (1.2pt);
			\draw[line width=0.6 pt] (0,0)--(-1,-0.75)--(2,-0.75)--(2,0.75)--(-1,0.75)--(0,0);
			\draw[line width=0.6 pt] (-1,0)--(0,0)--(1,0);
			\draw[line width=0.6 pt] (0,0)--(2,-0.75)--(1,0)--(2,0.75)--(0,0);

			\node at (0,0.25)  {$1$};
			\node at (2.2,-0.75) {$2$};
			\node at (2.2,0.75) {$3$};
			\node at (1.3,0) {$4$};
			\node at (-1.2,0.75)  {$5$};
			\node at (-1.2,-0.75) {$6$};
			\node at (-1.2,0) {$7$};
		\end{tikzpicture}
		\vspace{-3mm}
		\caption{Graph $H$}	\label{graph-H-of-cyclic-pmqn}
	\end{figure}
\end{proposition}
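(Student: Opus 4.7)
The plan mirrors the proof of Proposition~\ref{pr_H-join-cyclic-pqm}. First I would partition $\Z_n$, where $n = p^\ell q^m$, into seven blocks according to the shape of the element orders. Since every divisor of $n$ has the form $p^i q^j$ with $0 \le i \le \ell$ and $0 \le j \le m$, I set $S_1 = S(\Z_n)$ (orders in $\{1,p,q\}$) and split the remaining orders by whether each exponent is $0$, $1$, or at least $2$: let $S_2 = \{g : |g|=p^i,\ 2 \le i \le \ell\}$, $S_3 = \{g : |g|=q^j,\ 2 \le j \le m\}$, $S_4 = \{g : |g|=pq\}$, $S_5 = \{g : |g|=p^i q,\ 2 \le i \le \ell\}$, $S_6 = \{g : |g|=p q^j,\ 2 \le j \le m\}$, and $S_7 = \{g : |g|=p^i q^j,\ 2 \le i \le \ell,\ 2 \le j \le m\}$. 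Combining Theorems~\ref{th_numb-of-elem-cyclic-grp} and \ref{th_euler-phi-formula} and summing $\varphi(p^i q^j) = (p^i - p^{i-1})(q^j - q^{j-1})$ over the relevant index ranges yields the seven cardinalities appearing in the statement; the sums telescope, so nothing delicate happens.

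Next I would analyse each induced subgraph. For every $i \in \{2,\ldots,7\}$, any two distinct elements of $S_i$ have orders with a common factor of $p^2$, $q^2$, or $pq$ (hence composite), so $\Gamma_i \colonequals \Theta(\Z_n)[S_i] \cong E_{|S_i|}$. Theorem~\ref{th_singletion-dominating-set} makes every element of $S_1$ a dominating vertex, giving $\Gamma_1 \cong K_{p+q-1}$ and $\Gamma_1 \sim \Gamma_j$ for all $j \in \{2,\ldots,7\}$; this accounts for the six edges of $H$ incident to vertex $1$.

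The main, and most laborious, step is to examine the fifteen bipartite pairs $(S_i, S_j)$ with $2 \le i < j \le 7$. For each pair the gcd of the orders of any two representatives is constant (depending only on the blocks), so $\Gamma_i \sim \Gamma_j$ precisely when that gcd is $1$ or a prime. A direct checklist isolates the five adjacent pairs: $\{2,3\}$ (gcd $1$), $\{2,4\}$ and $\{2,6\}$ (gcd $p$), and $\{3,4\}$ and $\{3,5\}$ (gcd $q$); every other pair yields a composite gcd (containing $pq$, $p^2$, or $q^2$ as a factor), so the corresponding blocks are mutually independent. These adjacencies match exactly the edges of $H$ in Figure~\ref{graph-H-of-cyclic-pmqn}, completing the $H$-join isomorphism. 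The $(6,1)$-partition then follows at once, as $S_1$ is a clique and $S_2,\ldots,S_7$ are independent sets.

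The sole obstacle is bookkeeping the fifteen inter-block gcds without error. No new idea beyond the method of Proposition~\ref{pr_H-join-cyclic-pqm} is required, but one must be careful with pairs such as $(S_5,S_7)$ and $(S_6,S_7)$, where both prime parts contribute exponent at least $2$ and it is tempting to miscompute the gcd as a single prime rather than a product like $p^2 q$ or $p q^2$.
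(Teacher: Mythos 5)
Your proposal is correct and follows essentially the same route as the paper: the identical seven-block partition of $\Z_{p^\ell q^m}$ by order type, the same cardinality count via Theorems~\ref{th_numb-of-elem-cyclic-grp} and \ref{th_euler-phi-formula}, the dominating-vertex argument for $S_1$, and the same case check of inter-block gcds yielding exactly the adjacent pairs $\{2,3\},\{2,4\},\{2,6\},\{3,4\},\{3,5\}$ of $H$. No substantive difference from the paper's proof.
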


\begin{proof}[\textbf{Proof}]
Assume that $\ell, m\geq 2$. Set $n= p^\ell q^m$, and define	
	\begin{align*}
		S_1&=S(\Z_n) & S_5&=\{g\in \Z_n \colon |g|=p^jq,\; 2\leq j \leq \ell\}\\
		S_2&=\{g\in \Z_n \colon |g|=p^j,\; 2\leq j \leq  \ell\}& S_6&=\{g\in \Z_n \colon  |g|=pq^k,\;  2\leq k \leq m\}\\
		S_3&=\{g\in \Z_n \colon  |g|=q^k,\; 2\leq k \leq  m\} & S_7 &=\{g\in \Z_n \colon  |g|=p^jq^k,\; 2\leq j \leq \ell,\; 2\leq k \leq m\}. \\
		S_4&=\{g\in \Z_n \colon |g|=pq\}
	\end{align*}
By construction, the collection $\{S_1, S_2, \ldots, S_7\}$ forms a partition of $\Z_{n}$. By combining Theorems~\ref{th_numb-of-elem-cyclic-grp} and \ref{th_euler-phi-formula}, we obtain $|S_1|=p+q-1$, $|S_2|=p^\ell-p$, $|S_3|=q^m-q$, $|S_4|=(p-1)(q-1)$, $|S_5|=(p^\ell-p)(q-1)$, $|S_6|=(q^m-q)(p-1)$, $|S_7|=(p^\ell-p)(q^m-q)$. Now observe for each $i \in \{2, 3,\ldots,7\}$ that the gcd of the orders of any two distinct elements in $S_i$ is a composite number. Therefore $\Gamma_2 \colonequals \Theta(\Z_n)[S_2]\cong E_{p^\ell-p}$, $\Gamma_3 \colonequals \Theta(\Z_n)[S_3]\cong E_{q^m-q}$, $\Gamma_4 \colonequals  \Theta(\Z_n)[S_4]\cong E_{(p-1)(q-1)}$, $\Gamma_5 \colonequals \Theta(\Z_n)[S_5]\cong E_{(p^\ell-p)(q-1)}$, $\Gamma_6 \colonequals  \Theta(\Z_n)[S_6]\cong E_{(p-1)(q^m-q)}$, and $\Gamma_7 \colonequals \Theta(\Z_n)[S_7]\cong E_{(p^\ell-p)(q^m-q)}$. By Theorem~\ref{th_singletion-dominating-set}, we thus obtain $\Gamma_1 \colonequals \Theta(\Z_n)[S_1]\cong K_{p+q-1}$ and $\Gamma_1 \sim \Gamma_i$ for each $i\in \{2,3,\ldots,7\}$. Furthermore, observe that $\Gamma_2 \sim \Gamma_3$, $\Gamma_2 \sim \Gamma_4$, $\Gamma_2 \sim \Gamma_6$, $\Gamma_3 \sim \Gamma_4$, and $\Gamma_3 \sim \Gamma_5$. Finally, observe that $\Gamma_7$ is independent of $\Gamma_i$ for each $i \in \{2, 3, \ldots, 6\}$; $\Gamma_6$ is independent of $\Gamma_i$ for each $i \in \{3, 4, 5\}$; and $\Gamma_5$ is independent of both $\Gamma_2$ and $\Gamma_4$. Hence $\Theta(\Z_n)\cong H[
\Gamma_1, \Gamma_2, \ldots, \Gamma_7]$, where $H$ is given in Figure~\ref{graph-H-of-cyclic-pmqn}.
\end{proof}

\begin{proposition} \label{pr_H-join-cyclic-pqr}
We have
	\[\Theta(\Z_{pqr})\cong H[K_{p+q+r-2},E_{(p-1)(q-1)},E_{(q-1)(r-1)},E_{(p-1)(r-1)}, E_{(p-1)(q-1)(r-1)}],\] where $H$ is given in Figure~\ref{graph-H-of-cyclic-pqr}.
Consequently, $\Theta(\Z_{pqr})$ is a $(4,1)$-graph.
	\vspace{-2mm}
	\begin{figure}[h]
		\begin{tikzpicture}
			
			\draw[fill=black] (0,0) circle (1.2pt);
			\draw[fill=black] (0.75,0) circle (1.2pt);
			\draw[fill=black] (-0.75,0.75) circle (1.2pt);
			\draw[fill=black] (-0.75,-0.75) circle (1.2pt);
			\draw[fill=black] (1.5,0) circle (1.2pt);
			
			\draw[line width=0.6 pt] (0,0)--(0.75,0)--(-0.75,0.75)--(-0.75,-0.75)--(0.75,0);
			\draw[line width=0.6 pt] (-0.75,0.75)--(0,0)--(-0.75,-0.75);
			\draw[line width=0.6 pt] (0.75,0)--(1.5,0);

			\node at (0.85,0.2)  {$1$};
			\node at (-0.25,0) {$2$};
			\node at (-0.9,0.75) {$3$};
			\node at (-0.9,-0.75) {$4$};
			\node at (1.5,0.2) {$5$};
		\end{tikzpicture}
		\vspace{-3mm}
		\caption{Graph $H$}	\label{graph-H-of-cyclic-pqr}
	\end{figure}
\end{proposition}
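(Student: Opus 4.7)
The plan is to follow the same template used in Propositions~\ref{pr_H-join-cyclic-pm}, \ref{pr_H-join-cyclic-pqm}, and \ref{pr_H-join-cyclic-pmqn}: partition $\Z_{pqr}$ by the orders of its elements, use Theorems~\ref{th_numb-of-elem-cyclic-grp} and \ref{th_euler-phi-formula} to count each block, and then identify which blocks induce empty graphs, which block induces a clique, and which pairs of blocks are completely joined versus completely independent.

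First I would set $n = pqr$ and note that the divisors of $n$ are $1, p, q, r, pq, pr, qr, pqr$. I would then define
\[
\begin{aligned}
S_1 &= S(\Z_n), & S_2 &= \{g\in\Z_n:|g|=pq\}, & S_3 &= \{g\in\Z_n:|g|=qr\},\\
S_4 &= \{g\in\Z_n:|g|=pr\}, & S_5 &= \{g\in\Z_n:|g|=pqr\},
\end{aligned}
\]
so that $\{S_1,\ldots,S_5\}$ partitions $\Z_n$. Applying Theorems~\ref{th_numb-of-elem-cyclic-grp} and \ref{th_euler-phi-formula} gives $|S_1|=1+(p-1)+(q-1)+(r-1)=p+q+r-2$, and $|S_2|,|S_3|,|S_4|,|S_5|$ matching the exponents of the $E$-factors in the statement.

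Next I would analyze adjacencies. Within each $S_i$ for $i\in\{2,3,4,5\}$, any two distinct elements share an order whose gcd with itself is composite ($pq$, $qr$, $pr$, or $pqr$ respectively), so $\Gamma_i \colonequals \Theta(\Z_n)[S_i]$ is edgeless of the appropriate size. By Theorem~\ref{th_singletion-dominating-set}, every element of $S_1$ is a dominating vertex of $\Theta(\Z_n)$, so $\Gamma_1 \colonequals \Theta(\Z_n)[S_1] \cong K_{p+q+r-2}$ and $\Gamma_1\sim \Gamma_i$ for each $i\in\{2,3,4,5\}$. For the cross-edges among $S_2,S_3,S_4,S_5$, the gcds of the relevant orders are $\gcd(pq,qr)=q$, $\gcd(pq,pr)=p$, $\gcd(qr,pr)=r$ (all prime, giving $\Gamma_2\sim\Gamma_3$, $\Gamma_2\sim\Gamma_4$, $\Gamma_3\sim\Gamma_4$), while $\gcd(pq,pqr)=pq$, $\gcd(qr,pqr)=qr$, $\gcd(pr,pqr)=pr$ (all composite, so $\Gamma_5$ is independent of $\Gamma_2$, $\Gamma_3$, and $\Gamma_4$).

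Finally, I would read off that the adjacency pattern among the five blocks matches the graph $H$ of Figure~\ref{graph-H-of-cyclic-pqr}: vertex $1$ joined to all of $2,3,4,5$; the triangle $2$--$3$--$4$; and vertex $5$ pendant to $1$. This yields $\Theta(\Z_{pqr})\cong H[\Gamma_1,\Gamma_2,\Gamma_3,\Gamma_4,\Gamma_5]$, and since $\Gamma_1$ is a clique and the remaining four $\Gamma_i$'s are independent sets, $\Theta(\Z_{pqr})$ is a $(4,1)$-graph. The proof is entirely routine given the earlier propositions; the only place that requires care is the bookkeeping of the seven pairwise gcds determining which of the six potential edges among $\{2,3,4,5\}$ are present in $H$, so I would double-check those to avoid errors.
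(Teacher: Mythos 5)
Your proposal is correct and follows essentially the same route as the paper's own proof: the identical five-block partition of $\Z_{pqr}$ by element order, the same cardinality computations via Theorems~\ref{th_numb-of-elem-cyclic-grp} and \ref{th_euler-phi-formula}, and the same gcd bookkeeping to recover the edges of $H$. The only difference is that you spell out the seven pairwise gcds explicitly, which the paper leaves as an ``observe''; this is a harmless (indeed welcome) addition.
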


\begin{proof}[\textbf{Proof}]
Set $n = pqr$, and define
	\begin{align*}
		S_1&= S(\Z_n)& S_4 & =\{g\in \Z_n \colon |g|=pr\}\\
		S_2& =\{g\in \Z_n \colon |g|=pq\} &S_5&=\{g\in \Z_n \colon  |g|=pqr\}.\\
		S_3&=\{g\in \Z_n \colon |g|=qr\}
	\end{align*}
By construction, the collection $\{S_1, S_2, \ldots, S_5\}$ forms a partition of $\Z_{n}$. By combining Theorems~\ref{th_numb-of-elem-cyclic-grp} and \ref{th_euler-phi-formula}, we obtain $|S_1|=p+q+r-2$, $|S_2|=(p-1)(q-1)$, $|S_3|=(q-1)(r-1)$, $|S_4|=(p-1)(r-1)$, and $|S_4|=(p-1)(q-1)(r-1)$. Now observe for each $i \in \{2, 3, 4, 5\}$ that the gcd of the orders of any two distinct elements in $S_i$ is a composite number. Therefore $\Gamma_2 \colonequals \Theta(\Z_n)[S_2]\cong E_{(p-1)(q-1)}$, $\Gamma_3 \colonequals \Theta(\Z_n)[S_3]\cong E_{(q-1)(r-1)}$, $\Gamma_4 \colonequals \Theta(\Z_n)[S_4]\cong E_{(p-1)(r-1)}$, and $\Gamma_5 \colonequals \Theta(\Z_n)[S_5]\cong E_{(p-1)(q-1)(r-1)}$. By Theorem~\ref{th_singletion-dominating-set}, we thus obtain $\Gamma_1 \colonequals \Theta(\Z_n)[S_1]\cong K_{p+q+r-2}$ and $\Gamma_1 \sim \Gamma_i$ for each $i\in \{2,3,4,5\}$. Furthermore, observe that $\Gamma_2 \sim \Gamma_3$, $\Gamma_2 \sim \Gamma_4$, and $\Gamma_3 \sim \Gamma_4$. Finally, observe that $\Gamma_5$ 
 is independent of each of the subgraphs $\Gamma_2$, $\Gamma_3$, and $\Gamma_4$.
 Hence $\Theta(\Z_n)\cong H[\Gamma_1, \Gamma_2, \ldots, \Gamma_5]$, where $H$ is given in Figure~\ref{graph-H-of-cyclic-pqr}.
\end{proof}

\subsection{Dihedral Groups}

For every integer $n\geq 3$, recall that the \emph{dihedral group} $D_{n}$ of order $2n$ is a nonabelian group having the presentation: \[D_{n} = \langle a, b \colon a^n =  b^2=1, \ b^{-1}ab = a^{-1}\rangle,\] where $1$ is the identity of $D_{n}$ (cf. \cite[p.~181]{james-b01}). 

\vspace{0.4mm}
In this subsection, we prove that the graph $\Theta(D_n)$ admits the structure of a $(k, 1)$-graph for each $n \in \{p, pq, p^m,\ pq^m,\ p^\ell q^m,\ pqr\}$, 
where $\ell, m \geq 2$.


\begin{proposition}
The graph $\Theta(D_{p})$ is isomorphic to $K_{2p}$.
\end{proposition}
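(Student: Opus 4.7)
The plan is to prove that $D_p$ is an EPO-group, and then immediately invoke Theorem~\ref{th_group-EPO-iff-complete} to conclude $\Theta(D_p) \cong K_{2p}$. Recall that the definition of dihedral group requires $n \geq 3$, so here $p$ denotes an odd prime and $|D_p| = 2p$.

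The key computation is to verify the orders of all nonidentity elements of $D_p$. Using the presentation $D_p = \langle a, b \colon a^p = b^2 = 1,\ b^{-1}ab = a^{-1}\rangle$, every element can be written uniquely as $a^i$ or $a^i b$ with $0 \leq i \leq p-1$. For each $i \in \{1, 2, \ldots, p-1\}$, the rotation $a^i$ has order $p$, since $p$ is prime and $a$ has order $p$. For each $i \in \{0, 1, \ldots, p-1\}$, a straightforward calculation using the relation $b^{-1}ab = a^{-1}$ shows that the reflection $a^i b$ satisfies $(a^i b)^2 = a^i b a^i b = a^i a^{-i} b^2 = 1$, and since $a^i b \neq 1$, it has order $2$. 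Thus every nonidentity element of $D_p$ has order $2$ or $p$, both primes, which means $D_p$ is an EPO-group.

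With this in hand, the conclusion $\Theta(D_p) \cong K_{2p}$ follows directly from Theorem~\ref{th_group-EPO-iff-complete}. There is no real obstacle here: the only subtlety worth mentioning is the edge case concerning the prime $2$, but since $D_p$ is defined only for $p \geq 3$, we are guaranteed $p$ is odd and the two possible element orders $p$ and $2$ are genuinely distinct primes (though this distinction is immaterial for the EPO property).
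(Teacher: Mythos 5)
Your proposal is correct and takes essentially the same route as the paper, which simply observes that $D_p$ is an EPO-group of order $2p$ and invokes Theorem~\ref{th_group-EPO-iff-complete}. The only difference is that you explicitly verify the element orders (rotations of order $p$, reflections of order $2$), which the paper leaves unstated.
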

\begin{proof}[\textbf{Proof}]
It follows directly from Theorem~\ref{th_group-EPO-iff-complete}, since $D_p$ is an EPO-group of order $2p$.
\end{proof}

\begin{proposition}
We have $\Theta(D_{pq})\cong K_{pq+p+q-1}\vee E_{(p-1)(q-1)}$. Consequently, $\Theta(D_{pq})$ is a split graph.
\end{proposition}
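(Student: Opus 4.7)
The plan is to mirror the proof template used in Proposition~\ref{pr_group-of-2-distnct-prime} and Proposition~\ref{pr_H-join-cyclic-pqm}: partition the vertex set of $\Theta(D_{pq})$ into $S(D_{pq})$ and $T(D_{pq})$, determine the induced subgraphs on each piece, and then invoke Theorem~\ref{th_singletion-dominating-set} to glue them together via a join.

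First I would describe the element structure of $D_{pq}$. Its elements split into the $pq$ rotations $\{a^i : 0 \leq i \leq pq-1\}$, which form a cyclic subgroup isomorphic to $\Z_{pq}$, and the $pq$ reflections $\{a^i b : 0 \leq i \leq pq-1\}$. A direct computation using $b^{-1}ab = a^{-1}$ shows $(a^i b)^2 = 1$, so every reflection is an involution. Combined with Theorems~\ref{th_numb-of-elem-cyclic-grp} and~\ref{th_euler-phi-formula} applied to $\langle a \rangle \cong \Z_{pq}$, the number of elements of $D_{pq}$ of each order is: $1$ identity, $pq + 0$ involutions (the $pq$ reflections, since $\langle a \rangle$ contains no element of order $2$), $p-1$ elements of order $p$, $q-1$ elements of order $q$, and $(p-1)(q-1)$ elements of order $pq$.

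From this I would read off $|S(D_{pq})| = 1 + pq + (p-1) + (q-1) = pq+p+q-1$ and $|T(D_{pq})| = (p-1)(q-1)$, where $T(D_{pq})$ consists precisely of the rotations of order $pq$. Any two distinct elements of $T(D_{pq})$ have orders whose gcd is $pq$, which is composite, so $\Theta(D_{pq})[T(D_{pq})] \cong E_{(p-1)(q-1)}$. By Theorem~\ref{th_singletion-dominating-set}, every vertex of $S(D_{pq})$ is dominating in $\Theta(D_{pq})$, hence $\Theta(D_{pq})[S(D_{pq})] \cong K_{pq+p+q-1}$ and $\Theta(D_{pq})[S(D_{pq})] \sim \Theta(D_{pq})[T(D_{pq})]$. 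Combining these yields $\Theta(D_{pq}) \cong K_{pq+p+q-1} \vee E_{(p-1)(q-1)}$, and the partition $\{S(D_{pq}), T(D_{pq})\}$ is a $(1,1)$-partition, so $\Theta(D_{pq})$ is a split graph.

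There is no real obstacle here: the only point requiring a brief verification is that all reflections $a^i b$ are involutions (so they contribute to $S(D_{pq})$ and not to $T(D_{pq})$), which follows from the defining relation of $D_{pq}$. Everything else is a bookkeeping application of Theorems~\ref{th_numb-of-elem-cyclic-grp}, \ref{th_euler-phi-formula}, and~\ref{th_singletion-dominating-set}, exactly in the style of Proposition~\ref{pr_group-of-2-distnct-prime}.
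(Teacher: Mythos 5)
Your proposal is correct and follows exactly the paper's own argument: partition $D_{pq}$ into $S(D_{pq})$ and $T(D_{pq})$, show the former induces $K_{pq+p+q-1}$ via Theorem~\ref{th_singletion-dominating-set} and the latter induces $E_{(p-1)(q-1)}$ since all its elements have order $pq$, and conclude the join structure. Your added verification that every reflection is an involution is a detail the paper leaves implicit, but it does not change the route.
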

\begin{proof}[\textbf{Proof}]
Set $n = pq$. Observe that $|S(D_{n})| = n+p+q-1$, and so $|T(D_{n})| = (p-1)(q-1)$. Furthermore, observe that the gcd of the orders of any two distinct elements in $T(D_{n})$ is a composite number, and so $\Theta(D_{n})[T(D_{n})] \cong E_{(p-1)(q-1)}$.  
By Theorem~\ref{th_singletion-dominating-set}, we thus obtain $\Theta(D_{n})[S(D_{n})] \cong K_{n+p+q-1}$ and $\Theta(D_{n})[S(D_{n})] \sim \Theta(D_{n})[T(D_n)]$. Hence $\Theta(D_{n})\cong K_{n+p+q-1}\vee E_{(p-1)(q-1)}$.
\end{proof}


\begin{proposition}
For all $m\geq 2$, $\Theta(D_{p^m})\cong K_{p^m+p}\vee E_{p^m-p}$. Consequently, $\Theta(D_{p^m})$ is a split graph.
\end{proposition}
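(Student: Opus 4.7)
The plan is to mimic the proof of Proposition~\ref{pr_H-join-cyclic-pm}, partitioning $V(D_{p^m})$ into $S(D_{p^m})$ (destined to be the clique part) and $T(D_{p^m})$ (destined to be the independent part), and then verifying that these two induced subgraphs are a clique and an edgeless graph respectively, and that they are fully joined.

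First I would catalogue element orders in $D_{p^m}$. The rotation subgroup $\langle a\rangle$ is cyclic of order $p^m$, so Theorems~\ref{th_numb-of-elem-cyclic-grp} and~\ref{th_euler-phi-formula} give $\varphi(p)=p-1$ elements of order $p$ in $\langle a\rangle$ together with $p^m-p$ elements whose order is $p^j$ for some $2\le j\le m$. Each reflection $a^i b$ satisfies $(a^ib)^2=1$ and therefore has order $2$, contributing $p^m$ elements of prime order. Summing yields $|S(D_{p^m})|=1+(p-1)+p^m=p^m+p$, and consequently $|T(D_{p^m})|=2p^m-(p^m+p)=p^m-p$.

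Next I would verify the three adjacency claims. Any two distinct elements of $T(D_{p^m})$ lie in $\langle a\rangle$ and have orders $p^j,p^k$ with $j,k\ge 2$, so the gcd of their orders is $p^{\min(j,k)}\ge p^2$, which is composite; hence $\Theta(D_{p^m})[T(D_{p^m})]\cong E_{p^m-p}$. By Theorem~\ref{th_singletion-dominating-set}, every vertex in $S(D_{p^m})$ is a dominating vertex, which simultaneously yields $\Theta(D_{p^m})[S(D_{p^m})]\cong K_{p^m+p}$ and $\Theta(D_{p^m})[S(D_{p^m})]\sim \Theta(D_{p^m})[T(D_{p^m})]$. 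Assembling the three pieces gives $\Theta(D_{p^m})\cong K_{p^m+p}\vee E_{p^m-p}$, and since $\{S(D_{p^m}),T(D_{p^m})\}$ is then a $(1,1)$-partition, $\Theta(D_{p^m})$ is split.

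The only place that warrants a second glance is the edge case $p=2$, where the single order-$2$ rotation $a^{2^{m-1}}$ might look as though it needs separate accounting alongside the reflections; however, it is already absorbed in the $\varphi(p)=p-1$ elements of order $p$ in $\langle a\rangle$, so the unified formula $|S(D_{p^m})|=p^m+p$ continues to hold. No step is genuinely difficult; the work is essentially bookkeeping of element orders in $D_{p^m}$ coupled with the domination criterion of Theorem~\ref{th_singletion-dominating-set}.
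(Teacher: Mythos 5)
Your proposal is correct and follows essentially the same route as the paper's proof: partition $D_{p^m}$ into $S(D_{p^m})$ and $T(D_{p^m})$, count to get $p^m+p$ and $p^m-p$, observe that the orders of any two elements of $T$ have composite gcd, and invoke Theorem~\ref{th_singletion-dominating-set} for the clique and the join. The extra bookkeeping (explicit order census and the $p=2$ check) is sound but only elaborates what the paper leaves as an observation.
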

\begin{proof}[\textbf{Proof}]
Assume that $m\geq 2$, and set $n = p^m$. Observe that $|S(D_n)| = n+p$, and so  $|T(D_n)| = n-p$. Furthermore, observe that the gcd of the orders of any two distinct elements in $T(D_n)$ is a composite number, and so $\Theta(D_{n})[T(D_n)] \cong E_{n-p}$. By Theorem~\ref{th_singletion-dominating-set}, we thus obtain $\Theta(D_n)[S(D_n)] \cong K_{n+p}$ and $\Theta(D_n)[S(D_n)] \sim \Theta(D_{n})[T(D_n)]$. Hence $\Theta(D_{n})\cong K_{n+p}\vee E_{n-p}$.
\end{proof}

\begin{proposition}
For all $m\ge 2$,
\[\Theta(D_{pq^m})\cong H[K_{pq^m+p+q-1},E_{q^m-q},E_{(p-1)(q-1)},E_{(p-1)(q^m-q)}],\] where $H$ is given in Figure~\ref{graph-H-of-cyclic-pqm}. Consequently, $\Theta(D_{pq^m})$ is a $(3, 1)$-graph.
\end{proposition}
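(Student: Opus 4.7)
The plan is to mirror the proof of Proposition~\ref{pr_H-join-cyclic-pqm} for the cyclic case, adapting the partition of $\Z_{pq^m}$ to a partition of $D_{pq^m}$ that additionally absorbs the reflections. The key observation is that in $D_n = \langle a, b : a^n = b^2 = 1,\ b^{-1}ab = a^{-1}\rangle$, every element of the form $a^i b$ is an involution, so the $n$ reflections all lie in $S(D_n)$.

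First I would set $n = pq^m$ and define
\begin{align*}
S_1 &= S(D_n), & S_3 &= \{g \in D_n : |g| = pq\},\\
S_2 &= \{g \in D_n : |g| = q^j,\ 2 \le j \le m\}, & S_4 &= \{g \in D_n : |g| = pq^j,\ 2 \le j \le m\},
\end{align*}
which partition $D_n$ since every nonidentity, non-involution element of $D_n$ belongs to the rotation subgroup $\langle a \rangle \cong \Z_n$. Counting is routine: the rotations of prime order number $\varphi(p) + \varphi(q) = p+q-2$ by Theorems~\ref{th_numb-of-elem-cyclic-grp} and \ref{th_euler-phi-formula}, and adjoining the identity together with the $n$ reflection involutions yields $|S_1| = n + p + q - 1 = pq^m + p + q - 1$. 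The cardinalities of $S_2, S_3, S_4$ are inherited verbatim from the cyclic computation, since these sets contain only rotations.

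Next I would verify the induced-subgraph structure. For each $i \in \{2,3,4\}$, any two distinct elements of $S_i$ have orders whose gcd is composite (a power $q^j$ with $j \ge 2$, or $pq$, or $pq^j$ with $j \ge 2$), so $\Gamma_i := \Theta(D_n)[S_i]$ is the appropriate empty graph. By Theorem~\ref{th_singletion-dominating-set}, $\Gamma_1 := \Theta(D_n)[S_1] \cong K_{n+p+q-1}$ and $\Gamma_1 \sim \Gamma_i$ for every $i \in \{2,3,4\}$. For cross-adjacencies between the nontrivial pieces: $\Gamma_2 \sim \Gamma_3$ because $\gcd(q^j, pq) = q$ is prime; and $\Gamma_4$ is independent of both $\Gamma_2$ and $\Gamma_3$ because $\gcd(pq^k, q^j) = q^{\min(j,k)}$ (with $\min \ge 2$) and $\gcd(pq^k, pq) = pq$ are both composite. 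These are precisely the edges and non-edges of the graph $H$ of Figure~\ref{graph-H-of-cyclic-pqm}, so $\Theta(D_n) \cong H[\Gamma_1, \Gamma_2, \Gamma_3, \Gamma_4]$, and the final $(3,1)$-graph claim follows by taking $S_1$ as the single clique and $S_2, S_3, S_4$ as the three independent sets.

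There is no real obstacle; the only substantive difference from the cyclic proof is the bookkeeping for the $n$ reflection involutions, and since these all have order $2$ they contribute entirely to $S_1$ without disturbing the adjacency pattern among $S_2, S_3, S_4$. The $H$-join structure is therefore identical to that in Proposition~\ref{pr_H-join-cyclic-pqm}, with the clique enlarged from $K_{p+q-1}$ to $K_{pq^m + p + q - 1}$.
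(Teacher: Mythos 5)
Your proposal is correct and follows essentially the same route as the paper: the identical four-set partition $S_1,\dots,S_4$ of $D_{pq^m}$, the same cardinality computations (with the $n$ reflections absorbed into $S_1$ as involutions), and the same adjacency/independence checks yielding the $H$-join. The only difference is that you spell out the gcd verifications slightly more explicitly than the paper does, which is harmless.
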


\begin{proof}[\textbf{Proof}]
Assume that $m\geq 2$. Set $n = pq^m$, and define
	\begin{align*}
		S_1 &=S(D_{n}) &  S_3 &=\{g\in D_{n} \colon |g|=pq\} \\
		S_2 &=\{g\in D_{n} \colon |g|=q^j,\ 2\leq j \leq  m\}
		 & S_4&=\{g\in D_{n} \colon  |g|=pq^j,\  2\leq j \leq m\}.
	\end{align*}
By construction, the collection $\{S_1, S_2, S_3, S_4\}$ forms a partition of $D_{n}$. By combining Theorems~\ref{th_numb-of-elem-cyclic-grp} and \ref{th_euler-phi-formula}, we obtain
 $|S_1| = n+p+q-1$, $|S_2| = q^m-q$, $|S_3|=(p-1)(q-1)$, and $|S_4|=(p-1)(q^m-q)$. Now observe for each $i\in \{2,3,4\}$ that the gcd of the orders of any two distinct elements in $S_i$ is a composite number. Therefore  
$\Gamma_2 \colonequals \Theta(D_n)[S_2] \cong E_{q^m-q}$, $\Gamma_3 \colonequals \Theta(D_n)[S_3] \cong E_{(p-1)(q-1)}$, and $\Gamma_4 \colonequals \Theta(D_n)[S_4] \cong E_{(p-1)(q^m-q)}$. By Theorem~\ref{th_singletion-dominating-set}, we thus obtain $\Gamma_1 \colonequals \Theta(D_n)[S_1] \cong K_{n+p+q-1}$ and $\Gamma_1 \sim \Gamma_i$ for each $i\in \{2,3,4\}$. Furthermore, observe that $\Gamma_2 \sim \Gamma_3$. Finally, observe that $\Gamma_4$ is independent of both $\Gamma_2$ and $\Gamma_3$. Hence $\Theta(D_n)\cong H[\Gamma_1, \Gamma_2, \Gamma_3, \Gamma_4]$, where $H$ is given in Figure~\ref{graph-H-of-cyclic-pqm}.
\end{proof}

\begin{proposition}
For all $\ell, m\geq 2$, the graph $\Theta(D_{p^\ell q^m})$ is isomorphic to the following $H$-join:
	\[ H[K_{p^\ell q^m+p+q-1},E_{p^\ell-p},E_{q^m-q},E_{(p-1)(q-1)},E_{(q-1)(p^\ell-p)},E_{(p-1)(q^m-q)},E_{(p^\ell-p)(q^m-q)}],\] where $H$ is given in Figure~\ref{graph-H-of-cyclic-pmqn}. Consequently, $\Theta(D_{p^\ell q^m})$ is a $(6, 1)$-graph.
\end{proposition}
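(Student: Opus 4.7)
The plan is to mirror the proof of Proposition~\ref{pr_H-join-cyclic-pmqn} in the dihedral setting, using the same partition scheme and the same target graph $H$. The single new ingredient is the structural fact that every element of $D_n$ outside the cyclic subgroup $\langle a\rangle$ is an involution: since $(ba^i)^2 = 1$, the coset $b\langle a\rangle$ contributes $n = p^\ell q^m$ elements to $S(D_n)$, and every composite-order element of $D_n$ is therefore forced to lie in $\langle a\rangle \cong \Z_n$.

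Setting $n = p^\ell q^m$, I would introduce the same seven-cell partition $\{S_1, \ldots, S_7\}$ of $D_n$ as in the cyclic case: $S_1 = S(D_n)$, and for $i \ge 2$ each $S_i$ collects the elements of $D_n$ of the corresponding prescribed composite order. Because $S_2, \ldots, S_7$ consist of composite-order elements and hence sit inside $\langle a\rangle$, their cardinalities are inherited verbatim from the cyclic computation via Theorems~\ref{th_numb-of-elem-cyclic-grp} and \ref{th_euler-phi-formula}. For $S_1$, the count $p + q - 1$ coming from $S(\langle a\rangle)$ is augmented by the $n$ involutions in $b\langle a\rangle$, giving $|S_1| = p^\ell q^m + p + q - 1$.

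The edge analysis then proceeds identically to the cyclic proof. Each $\Gamma_i \colonequals \Theta(D_n)[S_i]$ for $i \ge 2$ is edgeless because any two distinct elements of $S_i$ share a composite gcd of orders; Theorem~\ref{th_singletion-dominating-set} gives $\Gamma_1 \cong K_{n+p+q-1}$ and $\Gamma_1 \sim \Gamma_i$ for all $i \ge 2$; and the remaining adjacencies among $\Gamma_2, \ldots, \Gamma_7$ depend only on the prime-power orders involved, so they reproduce exactly the pattern of Figure~\ref{graph-H-of-cyclic-pmqn}. I anticipate no real obstacle: the argument is a bookkeeping variation on the cyclic proof, with the involution observation carrying the only genuine group-theoretic content.
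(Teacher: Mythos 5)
Your proposal is correct and follows essentially the same route as the paper's own proof: the identical seven-cell partition, the same cardinality computations via Theorems~\ref{th_numb-of-elem-cyclic-grp} and \ref{th_euler-phi-formula}, and the same adjacency analysis reproducing the graph $H$ of Figure~\ref{graph-H-of-cyclic-pmqn}. Your explicit remark that all composite-order elements lie in $\langle a\rangle$ and that the coset $b\langle a\rangle$ contributes $n$ involutions to $S_1$ is the same fact the paper uses implicitly to get $|S_1| = n+p+q-1$.
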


\begin{proof}[\textbf{Proof}]
Assume that $\ell, m\geq 2$. Set $n = p^\ell q^m$, and define		
	\begin{align*}
		S_1&= S(D_n) & S_5&=\{g\in D_n \colon |g|=p^jq,\ 2\leq j \leq \ell\}\\
		S_2& =\{g\in D_n \colon |g|=p^j,\ 2\leq j \leq  \ell\} & S_6&=\{g\in D_n \colon |g|=pq^j,\ 2\leq j \leq m\}\\
		S_3&=\{g\in D_n \colon |g|=q^j,\ 2\leq j \leq  m\} & S_7&=\{g\in D_n \colon |g|=p^jq^k,\ 2\leq j \leq \ell,\ 2\leq k \leq m\}.\\ 
		S_4&=\{g\in D_n \colon |g|=pq\}
	\end{align*}
By construction, the collection $\{S_1, S_2, \ldots, S_7\}$ forms a partition of $D_n$. By combining Theorems~\ref{th_numb-of-elem-cyclic-grp} and \ref{th_euler-phi-formula}, we obtain
$|S_1| = n+p+q-1$, $|S_2|=p^\ell-p$, $|S_3|=q^m-q$, $|S_4|=(p-1)(q-1)$, $|S_5|=(q-1)(p^\ell-p)$, $|S_6|=(p-1)(q^m-q)$, and $|S_7|=(p^\ell-p)(q^m-q)$. 
Now observe for each $i\in \{2,3,\ldots, 7\}$ that the gcd of the orders of any two distinct elements in $S_i$ is a composite number. Therefore $\Gamma_2 \colonequals \Theta(D_n)[S_2] \cong E_{p^\ell-p}$, $\Gamma_3 \colonequals \Theta(D_n)[S_3] \cong E_{q^m-q}$, $\Gamma_4 \colonequals\Theta(D_n)[S_4] \cong E_{(p-1)(q-1)}$, $\Gamma_5 \colonequals \Theta(D_n)[S_5] \cong E_{(q-1)(p^\ell-p)}$, $\Gamma_6 \colonequals \Theta(D_n)[S_6] \cong E_{(p-1)(q^m-q)}$, and $\Gamma_7 \colonequals \Theta(D_n)[S_7] \cong E_{(p^\ell-p)(q^m-q)}$. By Theorem~\ref{th_singletion-dominating-set}, we thus obtain $\Gamma_1 \colonequals \Theta(D_n)[S_1] \cong  K_{n+p+q-1}$ and $\Gamma_1 \sim \Gamma_i$
for each $i\in \{2,3,\ldots,7\}$. Furthermore, observe that $\Gamma_3 \sim \Gamma_4$, $\Gamma_3 \sim \Gamma_5$, and $\Gamma_2 \sim \Gamma_i$ for each $i\in \{3,4,6\}$. Finally, observe that $\Gamma_7$ is independent of $\Gamma_i$ for each $i \in \{2,3,\ldots,6\}$; $\Gamma_6$ is independent of $\Gamma_i$ for each $i \in \{3,4,5\}$; and $\Gamma_5$ is independent of both $\Gamma_2$ and $\Gamma_4$. Hence $\Theta(D_n) \cong H[\Gamma_1, \Gamma_2, \ldots, \Gamma_7]$, where $H$ is given in Figure~\ref{graph-H-of-cyclic-pmqn}.
\end{proof}

\begin{proposition}
We have \[\Theta(D_{pqr})\cong H[K_{pqr+p+q+r-2},E_{(p-1)(q-1)},E_{(q-1)(r-1)},E_{(p-1)(r-1)}, E_{(p-1)(q-1)(r-1)}],\] where $H$ is given in Figure~\ref{graph-H-of-cyclic-pqr}. Consequently, $\Theta(D_{pqr})$ is a $(4,1)$-graph.
\end{proposition}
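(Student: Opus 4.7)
The plan is to adapt the argument of Proposition~\ref{pr_H-join-cyclic-pqr} to the dihedral setting with minimal changes. Set $n=pqr$ and define the same five subsets of $D_n$ as in the cyclic case: $S_1=S(D_n)$, and $S_2$, $S_3$, $S_4$, $S_5$ the sets of elements of orders $pq$, $qr$, $pr$, and $pqr$, respectively. These form a partition of $D_n$ because every element of $D_n$ lies in $\langle a\rangle$ (contributing one of the orders $1, p, q, r, pq, qr, pr, pqr$) or is a reflection $a^i b$ of order $2$.

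The key accounting step is to observe that all $n=pqr$ reflections are involutions, so they belong to $S_1$, while every element of composite order lies entirely in the cyclic subgroup $\langle a\rangle\cong \Z_n$. Applying Theorems~\ref{th_numb-of-elem-cyclic-grp} and \ref{th_euler-phi-formula} inside $\langle a\rangle$ gives $\varphi(p)+\varphi(q)+\varphi(r)+1=p+q+r-2$ elements of prime or identity order in the rotation part, so $|S_1|=pqr+p+q+r-2$, while $|S_2|=(p-1)(q-1)$, $|S_3|=(q-1)(r-1)$, $|S_4|=(p-1)(r-1)$, and $|S_5|=(p-1)(q-1)(r-1)$, matching the sizes in the target $H$-join.

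With the sizes established, the induced subgraphs follow the same pattern as in the cyclic case. For each $i\in\{2,3,4,5\}$, any two distinct elements of $S_i$ share a composite common order, so $\Gamma_i\colonequals\Theta(D_n)[S_i]$ is edgeless of the appropriate size. By Theorem~\ref{th_singletion-dominating-set}, $\Gamma_1\colonequals \Theta(D_n)[S_1]\cong K_{n+p+q-2+r}$ and $\Gamma_1\sim \Gamma_i$ for every $i\in\{2,3,4,5\}$. The remaining cross-adjacencies are determined by the gcd criterion: $\gcd(pq,qr)=q$, $\gcd(pq,pr)=p$, and $\gcd(qr,pr)=r$ give $\Gamma_2\sim\Gamma_3$, $\Gamma_2\sim\Gamma_4$, $\Gamma_3\sim\Gamma_4$, whereas $\gcd(pqr,pq)=pq$, $\gcd(pqr,qr)=qr$, $\gcd(pqr,pr)=pr$ are all composite, so $\Gamma_5$ is independent of each of $\Gamma_2$, $\Gamma_3$, $\Gamma_4$. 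This reproduces the adjacency pattern of the graph $H$ in Figure~\ref{graph-H-of-cyclic-pqr} and yields the claimed $H$-join decomposition; the $(4,1)$-partition is then immediate, since $\Gamma_1$ is a clique and each $\Gamma_i$ with $i\geq 2$ is independent.

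No genuine obstacle is expected: the structural computation is verbatim to the cyclic case, and the only substantive adjustment is recording that the $n$ reflections, all of order $2$, augment $S_1$ by $n$.
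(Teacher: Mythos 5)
Your proposal is correct and follows essentially the same route as the paper: the identical five-set partition by element order, the same size computations via Theorems~\ref{th_numb-of-elem-cyclic-grp} and \ref{th_euler-phi-formula} together with Theorem~\ref{th_singletion-dominating-set} for the clique part, and the same gcd-based adjacency analysis yielding the $H$-join. The only difference is that you spell out explicitly why the $pqr$ reflections land in $S_1$ and why the composite-order elements all lie in $\langle a\rangle$, which the paper leaves implicit.
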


\begin{proof}[\textbf{Proof}]
Set $n= pqr$, and define
	\begin{align*}
		S_1 & =S(D_n)& S_4&=\{g\in D_n \colon |g|=pr\}\\
		S_2&=\{g\in D_n \colon |g|=pq\} & S_5&=\{g\in D_n \colon |g|=pqr\}.\\
		S_3&=\{g\in D_n \colon |g|=qr\} 
	\end{align*}
By construction, the collection $\{S_1, S_2, \ldots, S_5\}$ forms a partition of $D_n$. By combining Theorems~\ref{th_numb-of-elem-cyclic-grp} and \ref{th_euler-phi-formula}, we obtain $|S_1| = n+p+q+r-2$, $|S_2|=(p-1)(q-1)$, $|S_3|=(q-1)(r-1)$, $|S_4|=(p-1)(r-1)$, and $|S_5|=(p-1)(q-1)(r-1)$. Now observe for each $i\in \{2,3,4,5\}$ that the gcd of the orders of any two distinct elements in $S_i$ is a composite number. Therefore $\Gamma_2 \colonequals \Theta(D_n)[S_2] \cong E_{(p-1)(q-1)}$, $\Gamma_3 \colonequals \Theta(D_n)[S_3] \cong E_{(q-1)(r-1)}$, $\Gamma_4 \colonequals \Theta(D_n)[S_4] \cong E_{(p-1)(r-1)}$, and $\Gamma_5 \colonequals \Theta(D_n)[S_5] \cong E_{(p-1)(q-1)(r-1)}$. By Theorem~\ref{th_singletion-dominating-set}, we thus obtain $\Gamma_1 \colonequals\Theta(D_n)[S_1] \cong K_{n+p+q+r-2}$ and $\Gamma_1 \sim \Gamma_i$
for each $i\in \{2,3,4,5\}$. Furthermore, observe that $\Gamma_2 \sim \Gamma_3$, $\Gamma_2 \sim \Gamma_4$, and $\Gamma_3 \sim \Gamma_4$. Finally, observe that $\Gamma_5$ is independent of each of the subgraphs $\Gamma_2$, $\Gamma_3$, and $\Gamma_4$. Hence 
$\Theta(D_n)=H[\Gamma_1, \Gamma_2, \ldots, \Gamma_5]$, where $H$ is given in Figure~\ref{graph-H-of-cyclic-pqr}.
\end{proof}

\subsection{Dicyclic Groups} For every integer $n\ge 2$, recall that the \emph{dicyclic group} $Q_{n}$ is a nonabelian group of order $4n$ having the presentation: \[Q_{n} = \langle a, b \colon a^{2n} = 1,\ a^n= b^2,\ b^{-1}ab = a^{-1}\rangle,\] where $1$ is the identity of $Q_{n}$ (cf.\cite[p~178]{james-b01}). If $n$ is a power of $2$, the $Q_n$ is called the \emph{generalized quaternion group} of order $4n$.

\vspace{0.5mm}
In this subsection, we prove that the graph $\Theta(Q_n)$ admits the structure of a $(k, 1)$-graph for each $n \in \{p, 2p, pq, 2^m, p^m\}$, where $p$ and $q$ are odd and $m \geq 2$. We assume throughout this subsection that $p$ and $q$ are distinct odd primes.


\begin{proposition}
We have $\Theta(Q_p)\cong  C_3[K_{p+1}, E_{p-1}, E_{2p}]$. Consequently, $\Theta(Q_p)$ is a $(2,1)$-graph.
\end{proposition}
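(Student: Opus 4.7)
The plan is to partition $Q_p$ by element orders into three sets matching the vertex classes of the claimed $C_3$-join, and then verify each internal induced subgraph and each pair of cross-adjacencies.

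First I would enumerate the elements of $Q_p$ and their orders. The $4p$ elements are $a^i$ and $a^i b$ for $0\le i \le 2p-1$. Among the $a^i$, the order of $a^i$ equals $2p/\gcd(i,2p)$, giving one element of order $1$, one of order $2$ (namely $a^p$), $p-1$ of order $p$, and $p-1$ of order $2p$. For the coset $a^i b$, a short computation using $b^{-1}ab = a^{-1}$ and $b^2 = a^p$ gives
\[(a^i b)^2 = a^i (b a^i b^{-1}) b^2 = a^i a^{-i} a^p = a^p,\]
so $(a^i b)^4 = 1$ and hence every element of the form $a^i b$ has order $4$. Thus I define $S_1 \colonequals S(Q_p)$ (identity, $a^p$, and the $p-1$ elements of order $p$), $S_2 \colonequals \{g\in Q_p\colon |g|=2p\}$, and $S_3\colonequals \{g\in Q_p\colon |g|=4\}$. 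Then $\{S_1, S_2, S_3\}$ partitions $Q_p$ with $|S_1|=p+1$, $|S_2|=p-1$, and $|S_3|=2p$.

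Next I would analyze the induced subgraphs. By Theorem~\ref{th_singletion-dominating-set}, every element of $S_1$ is a dominating vertex of $\Theta(Q_p)$, so $\Gamma_1\colonequals \Theta(Q_p)[S_1]\cong K_{p+1}$. Any two distinct elements of $S_2$ have orders with $\gcd 2p$, and any two elements of $S_3$ have orders with $\gcd 4$; both are composite, hence $\Gamma_2 \colonequals \Theta(Q_p)[S_2]\cong E_{p-1}$ and $\Gamma_3\colonequals \Theta(Q_p)[S_3]\cong E_{2p}$. For the cross-relations, $\Gamma_1\sim\Gamma_2$ and $\Gamma_1\sim\Gamma_3$ follow at once from Theorem~\ref{th_singletion-dominating-set}. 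The crucial observation is that, since $p$ is an odd prime, $\gcd(2p,4)=2$, which is prime, so $\Gamma_2\sim\Gamma_3$. Consequently all three pairs of parts are fully joined, which is exactly the adjacency pattern of $C_3$, yielding $\Theta(Q_p)\cong C_3[\Gamma_1,\Gamma_2,\Gamma_3]$. The $(2,1)$-partition is then immediate: $S_1$ is a clique, and $S_2$, $S_3$ are independent sets.

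The only real obstacle is the order computation for the $a^i b$ elements and the observation that $\gcd(2p,4)=2$ rather than $4$; this is precisely where the hypothesis that $p$ is odd is used. Everything else is a routine application of the two cardinality theorems together with Theorem~\ref{th_singletion-dominating-set}.
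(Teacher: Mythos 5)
Your proof is correct and follows essentially the same route as the paper: the same three-part partition (your $S_3$ of order-$4$ elements is exactly the paper's $Q_p\setminus\langle a\rangle$), the same appeal to Theorem~\ref{th_singletion-dominating-set} for $\Gamma_1$ and its joins, and the same gcd observations for the remaining adjacencies. The only difference is that you spell out the order computation $(a^ib)^2=a^p$ explicitly, which the paper leaves implicit.
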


\begin{proof}[\textbf{Proof}]
Define
	\begin{align*}
		S_1 &= S(Q_p) & S_2 &=\{g\in \langle a \rangle  \colon |g|= 2p\} & 
		S_3 &=  Q_p \setminus \langle a \rangle.
	\end{align*}
By construction, the collection $\{S_1, S_2, S_3\}$ forms a partition of $Q_p$.
Note that $|S_3| = 2p$. By combining Theorems~\ref{th_numb-of-elem-cyclic-grp} and \ref{th_euler-phi-formula}, we obtain $|S_1| = p+1$ and $|S_2| = p-1$. Now observe for each $i\in \{2, 3\}$ that the gcd of the orders of any two distinct elements in $S_i$ is a composite number. Therefore $\Gamma_2 \colonequals \Theta(Q_p)[S_2] \cong E_{p-1}$ and $\Gamma_3 \colonequals \Theta(Q_p)[S_3] \cong E_{2p}$. By Theorem~\ref{th_singletion-dominating-set}, we thus obtain $\Gamma_1 \colonequals \Theta(Q_p)[S_1] \cong  K_{p+1}$, $\Gamma_1 \sim \Gamma_2$, and $\Gamma_1 \sim \Gamma_3$. Furthermore,  observe that $\Gamma_2 \sim \Gamma_3$. Hence $\Theta(Q_p)\cong  C_3[\Gamma_1, \Gamma_2, \Gamma_3]$. 
\end{proof}

\begin{proposition}\label{pr_graph-H-of-Dic-2p}
We have $\Theta(Q_{2p})\cong H[K_{p+1}, E_{p-1},E_{2p-2},E_{4p+2}]$, where $H$ is given in Figure~\ref{graph-H-of-Dic-2p}. Consequently, $\Theta(Q_{2p})$ is a $(3,1)$-graph.
	\vspace{-2mm}
\begin{figure}[h]
	\centering
	\begin{tikzpicture}
		\draw[fill=black] (0.0,0.35) circle (1.2pt);
		\draw[fill=black] (0.0,-0.35) circle (1.2pt);
		\draw[fill=black] (0.7,0) circle (1.2pt);
		\draw[fill=black] (1.2,0) circle (1.2pt);
		
		\draw[line width=0.6 pt] (1.2,0) -- (0.7,0) -- (0.0,0.35)--(0.0, -0.35)--(0.7,0);
		
		\node at (0.7,0.2) {$1$};
		\node at (-0.15, 0.4) {$2$};
		\node at (-0.15, -0.4) {$4$};
		\node at (1.2, 0.2) {$3$};
	\end{tikzpicture}
	\vspace{-2mm}
	\caption{Graph $H$}	\label{graph-H-of-Dic-2p}
\end{figure}
\end{proposition}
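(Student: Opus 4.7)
The plan is to mirror the strategy of the preceding propositions in this subsection: partition $Q_{2p}$ into $S(Q_{2p})$ together with several sets of elements of a fixed composite order, compute cardinalities, verify that each of the latter cells induces an empty subgraph, and then determine cross-adjacencies via gcd computations.

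The first step is to enumerate the possible orders of elements in $Q_{2p}$. The cyclic subgroup $\langle a\rangle$ has order $4p$, so by Theorems~\ref{th_numb-of-elem-cyclic-grp} and \ref{th_euler-phi-formula} its element orders are precisely the divisors of $4p$, namely $1, 2, 4, p, 2p, 4p$. For the $4p$ elements outside $\langle a\rangle$, the defining relations give $(a^{i}b)^2 = a^{i}(ba^{i})b = a^{i}a^{-i}b^{2} = b^{2} = a^{2p}$, which is the unique involution of $Q_{2p}$; hence every element of $Q_{2p}\setminus\langle a\rangle$ has order exactly $4$.

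This motivates the partition
\begin{align*}
S_1 &= S(Q_{2p}), & S_3 &= \{g\in Q_{2p}\colon |g|=4p\},\\
S_2 &= \{g\in Q_{2p}\colon |g|=2p\}, & S_4 &= \{g\in Q_{2p}\colon |g|=4\}.
\end{align*}
Counting gives $|S_1| = 1 + 1 + (p-1) = p+1$ (the identity, the unique involution $a^{2p}$, and the $\varphi(p)$ elements of order $p$), $|S_2| = \varphi(2p) = p-1$, $|S_3| = \varphi(4p) = 2p-2$, and $|S_4| = \varphi(4) + 4p = 4p+2$ (the two elements of order $4$ inside $\langle a\rangle$ together with the $4p$ elements outside).

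The adjacency analysis is then routine. Within each $S_i$ for $i\in\{2,3,4\}$, the gcd of orders of two distinct elements equals the common composite order, so $\Gamma_i\colonequals\Theta(Q_{2p})[S_i]$ is an empty graph of the stated size; by Theorem~\ref{th_singletion-dominating-set}, $\Gamma_1\colonequals\Theta(Q_{2p})[S_1]\cong K_{p+1}$ and $\Gamma_1\sim\Gamma_i$ for each $i\in\{2,3,4\}$. For the remaining cross-pairs, $\gcd(2p,4)=2$, $\gcd(2p,4p)=2p$, and $\gcd(4p,4)=4$, which yields $\Gamma_2\sim\Gamma_4$ while $\Gamma_3$ is independent of both $\Gamma_2$ and $\Gamma_4$, exactly matching the edge set of $H$ in Figure~\ref{graph-H-of-Dic-2p}. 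The point that really carries the argument is the identity $(a^{i}b)^2 = a^{2p}$ and the use of $p$ odd in concluding $\gcd(2p,4)=2$; the latter is precisely what forces the edge between vertices $2$ and $4$ in $H$, and it is the only place where the hypothesis that $p$ is odd is used.
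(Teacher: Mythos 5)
Your proof is correct and follows essentially the same route as the paper: the identical four-cell partition (your $S_4$, the set of all order-$4$ elements, coincides with the paper's $(Q_{2p}\setminus\langle a\rangle)\cup\{g\in\langle a\rangle\colon|g|=4\}$), the same cardinality counts, and the same gcd-based adjacency analysis. You supply slightly more detail than the paper in verifying that every element outside $\langle a\rangle$ has order $4$ and in flagging where the oddness of $p$ is used, but the argument is the same.
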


\begin{proof}[\textbf{Proof}]
Set $n= 2p$, and define
	\begin{align*}
		S_1 &= S(Q_n) & S_3 &=\{g\in \langle a \rangle \colon |g|= 4p\}\\
		S_2& =\{g\in \langle a \rangle \colon |g|= 2p\} &S_4& = (Q_{n} \setminus \langle a \rangle)\cup \{g\in \langle a \rangle \colon |g|=4\}.
	\end{align*}
By construction, the collection $\{S_1, S_2, S_3, S_4\}$ forms a partition of $Q_n$. Note that $|Q_{n} \setminus \langle a \rangle| = 4p$. By combining Theorems~\ref{th_numb-of-elem-cyclic-grp} and \ref{th_euler-phi-formula}, we obtain $|S_1| = p+1$, $|S_2| = p-1$, $|S_3|=2p-2$, and $|S_4| = 4p+2$. Now observe for each $i\in \{2,3,4\}$ that the gcd of the orders of any two distinct elements in $S_i$ is a composite number. Therefore $\Gamma_2 \colonequals \Theta(Q_{n})[S_2]\cong E_{p-1}$, $\Gamma_3 \colonequals \Theta(Q_{n})[S_3]\cong E_{2p-2}$, and $\Gamma_4 \colonequals \Theta(Q_{n})[S_4]\cong E_{4p+2}$. By Theorem~\ref{th_singletion-dominating-set}, we thus obtain $\Gamma_1 \colonequals \Theta(Q_n)[S_1] \cong  K_{p+1}$ and $\Gamma_1 \sim \Gamma_i$ for each $i\in \{2,3,4\}$. Furthermore, observe that $\Gamma_2 \sim \Gamma_4$. Finally, observe that $\Gamma_3$ is independent of both $\Gamma_2$ and $\Gamma_4$. Hence $\Theta(Q_{n})\cong H[\Gamma_1, \Gamma_2, \Gamma_3, \Gamma_4]$, where $H$ is given in Figure~\ref{graph-H-of-Dic-2p}.
\end{proof}


\begin{proposition}
We have
\[\Theta(Q_{pq})\cong H[K_{p+q}, E_{p-1}, E_{q-1}, E_{(p-1)(q-1)}, E_{(p-1)(q-1)}, E_{2pq}],\] where $H$ is given in Figure~\ref{graph-H-of-Dic-pq}. Consequently, $\Theta(Q_{pq})$ is a $(5,1)$-graph.
	\vspace{-2mm}	
\begin{figure}[h]
		\centering
		\begin{tikzpicture}[very thick]
			\draw[fill=black] (-0.5,0) circle (1.2pt);
			\draw[fill=black] (0,0.9) circle (1.2pt);
			\draw[fill=black] (0,-1) circle (1.2pt);
			\draw[fill=black] (4,0.9) circle (1.2pt);
			\draw[fill=black] (4,-1) circle (1.2pt);
			\draw[fill=black] (2,-0.5) circle (1.2pt);
			
			\draw[line width=0.6 pt] (-0.5,0)--(0,0.9)--(4,0.9)--(4,-1)--(0,-1)--(-0.5,0);
			\draw[line width=0.6 pt] (0,-1)--(2,-0.5)--(2,-0.5)--(4,-1);
			\draw[line width=0.6 pt] (0,0.9)--(4,-1);
			\draw[line width=0.6 pt] (0,-1)--(4,0.9);					
			\draw[line width=0.6 pt] (0,0.9)--(0,-1);
			\draw[line width=0.6 pt] (0,0.9)--(2,-0.5)--(4,0.9);

			\node at (-0.65,0) {$5$};
			\node at (-0.2,0.9) {$1$};
			\node at (-0.2,-1) {$6$};
			\node at (4.2,0.9) {$2$};
			\node at (4.2,-1) {$3$};
			\node at (2,-0.75) {$4$};
		\end{tikzpicture}
	\vspace{-2mm}
		\caption{Graph $H$}	\label{graph-H-of-Dic-pq}
	\end{figure}
\end{proposition}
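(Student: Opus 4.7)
The plan is to mimic the six-block structure of Proposition~\ref{pr_graph-H-of-Dic-2p}. Setting $n = pq$, I would partition $Q_n$ into $S_1 \colonequals S(Q_n)$, four ``order layers'' inside the cyclic subgroup $\langle a\rangle \cong \Z_{2pq}$ --- namely the sets $S_2, S_3, S_4, S_5$ of elements of orders $2p$, $2q$, $pq$, $2pq$, respectively --- and the complement $S_6 \colonequals Q_n \setminus \langle a\rangle$. The key non-cyclic input is the standard dicyclic identity $(a^i b)^2 = a^i \cdot a^{-i} \cdot b^2 = a^n$, which shows that every element of $S_6$ has order exactly $4$; hence $|S_6| = 2pq$ and every pair of distinct elements in $S_6$ has gcd of orders equal to $4$. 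The other cardinalities $|S_1| = p+q$, $|S_2| = p-1$, $|S_3| = q-1$, $|S_4| = |S_5| = (p-1)(q-1)$ follow from Theorems~\ref{th_numb-of-elem-cyclic-grp} and \ref{th_euler-phi-formula} applied to $\langle a\rangle$.

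Once the blocks are set up, the structure of each $\Gamma_i \colonequals \Theta(Q_n)[S_i]$ is routine. Theorem~\ref{th_singletion-dominating-set} gives $\Gamma_1 \cong K_{p+q}$ together with $\Gamma_1 \sim \Gamma_i$ for all $i \in \{2,\ldots,6\}$, accounting for all edges from vertex~$1$ of $H$. For each $i \in \{2,\ldots,6\}$, the internal pairwise gcd of orders in $S_i$ is one of $2p, 2q, pq, 2pq, 4$, all composite; hence $\Gamma_i$ is edgeless with the predicted cardinality.

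The substantive part is the cross-adjacency table between $\Gamma_2, \ldots, \Gamma_6$, which reduces to ten one-line gcd computations using that $p$ and $q$ are distinct odd primes. The seven ``prime or $1$'' gcds $\gcd(2p, 2q) = 2$, $\gcd(2p, pq) = p$, $\gcd(2q, pq) = q$, $\gcd(2p, 4) = \gcd(2q, 4) = \gcd(2pq, 4) = 2$, and $\gcd(pq, 4) = 1$ give the adjacencies $\Gamma_2 \sim \Gamma_3$, $\Gamma_2 \sim \Gamma_4$, $\Gamma_3 \sim \Gamma_4$, and $\Gamma_i \sim \Gamma_6$ for $i \in \{2, 3, 4, 5\}$. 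The three remaining gcds $\gcd(2p, 2pq) = 2p$, $\gcd(2q, 2pq) = 2q$, and $\gcd(pq, 2pq) = pq$ are composite, so $\Gamma_5$ is independent of each of $\Gamma_2, \Gamma_3, \Gamma_4$. Comparing against Figure~\ref{graph-H-of-Dic-pq} under the labeling $i \leftrightarrow \Gamma_i$ then gives $\Theta(Q_n) \cong H[\Gamma_1, \Gamma_2, \ldots, \Gamma_6]$.

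The only real pitfall is bookkeeping: the two $(p-1)(q-1)$-blocks $\Gamma_4$ and $\Gamma_5$ have equal cardinality but different adjacency profiles, so care is needed to ensure that the $pq$-layer is attached at vertex~$4$ of $H$ (which is adjacent to $\Gamma_2$ and $\Gamma_3$) and the $2pq$-layer at vertex~$5$ (of degree only~$2$ in $H$, touching only $\Gamma_1$ and $\Gamma_6$).
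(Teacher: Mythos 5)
Your proposal is correct and follows essentially the same route as the paper's proof: the identical six-block partition of $Q_{pq}$ by element orders $1$ or prime, $2p$, $2q$, $pq$, $2pq$, and the complement of $\langle a\rangle$, with the same gcd computations for the cross-adjacencies. The only difference is that you explicitly justify that every element outside $\langle a\rangle$ has order $4$ via the dicyclic relation, a fact the paper uses without comment.
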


\begin{proof}[\textbf{Proof}]
Set $n = pq$, and define 
\begin{align*}
		S_1 &= S(Q_n)  & S_4 &=\{g\in \langle a \rangle \colon |g|= n\}\\
		S_2 &=\{g\in \langle a \rangle \colon |g|= 2p\} & S_5 &=\{g\in \langle a \rangle \colon |g|= 2n\}\\
		S_3 &=\{g\in \langle a \rangle \colon |g|= 2q\} & S_6 &= Q_{n} \setminus \langle a \rangle. 
	\end{align*}
By construction, the collection $\{S_1, S_2, \ldots, S_6\}$ forms a partition of $Q_{n}$. Note that $|S_6| = 2n$. By combining Theorems~\ref{th_numb-of-elem-cyclic-grp} and \ref{th_euler-phi-formula}, we obtain $|S_1|=p+q$, $|S_2|=p-1$, $|S_3|=q-1$, and $|S_4|= |S_5|= (p-1)(q-1)$. Now observe for each $i\in \{2,3,\ldots,6\}$ that the gcd of the orders of any two distinct elements in $S_i$ is a composite number. Therefore $\Gamma_2 \colonequals \Theta(Q_{n})[S_2]\cong E_{p-1}$, $\Gamma_3 \colonequals\Theta(Q_{n})[S_3]\cong E_{q-1}$, $\Gamma_4 \colonequals\Theta(Q_{n})[S_4]\cong E_{(p-1)(q-1)}$, $\Gamma_5 \colonequals\Theta(Q_{n})[S_5]\cong E_{(p-1)(q-1)}$, and $\Gamma_6 \colonequals \Theta(Q_{n})[S_6]\cong E_{2n}$. By Theorem~\ref{th_singletion-dominating-set}, we thus obtain $\Gamma_1 \colonequals \Theta(Q_n)[S_1] \cong  K_{p+q}$ and $\Gamma_1 \sim \Gamma_i$ for each $i\in \{2,3,\ldots,6\}$. Furthermore, observe that $\Gamma_2 \sim \Gamma_3$, $\Gamma_2 \sim \Gamma_4$, $\Gamma_3 \sim \Gamma_4$, and $\Gamma_6 \sim \Gamma_i$ for each $i \in \{2,3,4,5\}$. Finally, observe that $\Gamma_5$ is independent of each of the subgraphs  $\Gamma_2$, $\Gamma_3$, and $\Gamma_4$. Hence
$\Theta(Q_{n})\cong H[\Gamma_1, \Gamma_2, \ldots, \Gamma_6]$, where $H$ is given in Figure~\ref{graph-H-of-Dic-pq}.
\end{proof}

\begin{proposition}\label{pr_H-join-of-generalized-quaternion-grp}
We have $\Theta(Q_{2^m})\cong K_2\vee E_{2^{m+2}-2}$. Consequently, $\Theta(Q_{2^m})$ is a split graph.
\end{proposition}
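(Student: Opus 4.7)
The plan is to partition $Q_{2^m} = V(\Theta(Q_{2^m}))$ into $S(Q_{2^m})$ and $T(Q_{2^m})$ and then apply Theorem~\ref{th_singletion-dominating-set} in the same way as in Proposition~\ref{pr_H-join-cyclic-pm}. The only new ingredient is a direct analysis of the element orders of the generalized quaternion group, using its presentation.

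First I would observe that the cyclic subgroup $\langle a \rangle$ has order $2^{m+1}$, so by Theorems~\ref{th_numb-of-elem-cyclic-grp} and \ref{th_euler-phi-formula} it contains exactly one element of order $2$ (namely $a^{2^m}$) together with the identity, while every other element of $\langle a \rangle$ has order $2^j$ with $j \geq 2$. Next, for any $a^i b \in Q_{2^m}\setminus \langle a \rangle$, the relations $b^{-1} a b = a^{-1}$ and $b^2 = a^{2^m}$ yield $ba^i = a^{-i}b$, and hence
\[
(a^i b)^2 = a^i(ba^i)b = a^i(a^{-i}b)b = b^2 = a^{2^m},
\]
which has order $2$; thus $|a^i b| = 4$. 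Combining these observations, every nonidentity element of $Q_{2^m}$ has $2$-power order, and exactly one such element has order $2$. Therefore $|S(Q_{2^m})| = 2$ and $|T(Q_{2^m})| = 2^{m+2} - 2$.

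To finish, I would note that the gcd of the orders of any two distinct elements of $T(Q_{2^m})$ is a multiple of $4$ and hence composite, so $\Theta(Q_{2^m})[T(Q_{2^m})] \cong E_{2^{m+2}-2}$. Theorem~\ref{th_singletion-dominating-set} then gives $\Theta(Q_{2^m})[S(Q_{2^m})] \cong K_2$ with $\Theta(Q_{2^m})[S(Q_{2^m})] \sim \Theta(Q_{2^m})[T(Q_{2^m})]$, yielding the desired isomorphism $\Theta(Q_{2^m}) \cong K_2 \vee E_{2^{m+2}-2}$. The only step that requires care is verifying that every element outside $\langle a \rangle$ squares to $a^{2^m}$; once that is established, the rest is routine bookkeeping with the already cited theorems.
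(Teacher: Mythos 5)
Your proposal is correct and follows essentially the same route as the paper's proof: partition $Q_{2^m}$ into $S(Q_{2^m})$ and $T(Q_{2^m})$, note that all orders in $T(Q_{2^m})$ are powers of $2$ at least $4$ so their pairwise gcds are composite, and apply Theorem~\ref{th_singletion-dominating-set} to get the join with $K_2$. The only difference is that you explicitly verify $(a^ib)^2=a^{2^m}$ and hence $|S(Q_{2^m})|=2$, a computation the paper leaves as an ``observe that''; this is a welcome addition but not a different argument.
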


\begin{proof}[\textbf{Proof}]
Set $n = 2^m$. Observe that $|S(Q_{n})| =2$ and $|T(Q_{n})| = 4n-2$.  Furthermore, the gcd of the orders of any two distinct elements in $T(Q_{n})$ is a composite number, and so $\Theta(Q_{n})[T(Q_{n})]\cong E_{4n-2}$. By Theorem~\ref{th_singletion-dominating-set}, we thus obtain $\Theta(Q_{n})[S(Q_{n})]\cong K_2$ and $\Theta(Q_{n})[S(Q_{n})] \sim \Theta(Q_{n})[T(Q_{n})]$. Hence 
$\Theta(Q_{n})\cong K_2 \vee E_{4n-2}$.
\end{proof}

\begin{proposition}\label{pr_prime-coprime-graph-of-Qpk}
For all $m\ge 2$,
$\Theta(Q_{p^m})\cong H[K_{p+1}, E_{p-1},E_{p^m-p},E_{p^m-p}, E_{2p^m}]$, where $H$ is given in Figure~\ref{graph-H-of-Dic-pk}. Consequently, $\Theta(Q_{p^m})$ is a $(4,1)$-graph.
	\vspace{-2mm}
	\begin{figure}[h]
		\centering
		\begin{tikzpicture}
			\draw[fill=black] (0,0) circle (1.2pt);
			\draw[fill=black] (1.5,0) circle (1.2pt);
			\draw[fill=black] (1.5,1) circle (1.2pt);
			\draw[fill=black] (0,1) circle (1.2pt);
			\draw[fill=black] (-0.5,0.5) circle (1.2pt);

			\draw[line width=0.6 pt] (0,0)--(1.5,0)--(1.5,1)--(0,1)--(0,0)--(-0.5,0.5)--(0,1)--(1.5,0);	
			\draw[line width=0.6 pt] (0,0)--(1.5,1);
			
			\node at (-0.15,-0.1) {$1$};
			\node at (1.65,0) {$2$};
			\node at (1.65,1) {$3$};
			\node at (-0.15,1.1) {$5$};
			\node at (-0.65,0.5) {$4$};
			\end{tikzpicture}
		\vspace{-2mm}
		\caption{Graph $H$}	\label{graph-H-of-Dic-pk}		
	\end{figure}
\end{proposition}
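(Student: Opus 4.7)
The plan is to mirror the strategy used in Proposition~\ref{pr_graph-H-of-Dic-2p}, adapted to handle the full tower of powers $p^j$ with $2\le j\le m$. The first step is to record the order structure of $Q_{p^m}$. Since $(a^kb)^2=b^2=a^{p^m}$ and $p$ is odd, every element of $Q_{p^m}\setminus\langle a\rangle$ has order $4$, accounting for $2p^m$ elements. Inside the cyclic subgroup $\langle a\rangle$ of order $2p^m$, the element orders are precisely the divisors of $2p^m$, namely $1, 2, p^j, 2p^j$ for $1\le j\le m$.

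With this in hand, I would define the partition
\begin{align*}
S_1 &= S(Q_{p^m}), & S_3 &= \{g\in\langle a\rangle \colon |g|=p^j,\ 2\le j\le m\},\\
S_2 &= \{g\in\langle a\rangle \colon |g|=2p\}, & S_4 &= \{g\in\langle a\rangle \colon |g|=2p^j,\ 2\le j\le m\},\\
& & S_5 &= Q_{p^m}\setminus\langle a\rangle.
\end{align*}
A direct check that these sets cover $Q_{p^m}$ disjointly, combined with Theorems~\ref{th_numb-of-elem-cyclic-grp} and \ref{th_euler-phi-formula} and the preceding order analysis, gives the cardinalities $|S_1|=p+1$, $|S_2|=p-1$, $|S_3|=|S_4|=p^m-p$, and $|S_5|=2p^m$, matching the claimed sizes of the constituent graphs.

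Next I would verify that each induced subgraph $\Gamma_i\colonequals\Theta(Q_{p^m})[S_i]$ for $i\in\{2,3,4,5\}$ has no edges: any two distinct elements inside $S_i$ have orders whose gcd is $2p$, a power $p^s$ with $s\ge 2$, a number of the form $2p^s$ with $s\ge 2$, or $4$, respectively, and each of these is composite. Hence $\Gamma_2\cong E_{p-1}$, $\Gamma_3\cong E_{p^m-p}$, $\Gamma_4\cong E_{p^m-p}$, and $\Gamma_5\cong E_{2p^m}$. By Theorem~\ref{th_singletion-dominating-set}, $\Gamma_1\cong K_{p+1}$ and $\Gamma_1\sim\Gamma_i$ for every $i\in\{2,3,4,5\}$, which accounts for the edges of $H$ incident to vertex $1$.

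The remaining adjacencies in $H$ are determined by a case-by-case gcd computation: $\gcd(4,2p)=2$, $\gcd(4,p^j)=1$, and $\gcd(4,2p^j)=2$, so $\Gamma_5\sim\Gamma_i$ for all $i\in\{2,3,4\}$; and $\gcd(2p,p^j)=p$ for $j\ge 2$, giving $\Gamma_2\sim\Gamma_3$. On the other hand $\gcd(2p,2p^j)=2p$, $\gcd(p^j,2p^k)=p^{\min(j,k)}$ with $\min(j,k)\ge 2$ are composite, so $\Gamma_2$ is independent of $\Gamma_4$ and $\Gamma_3$ is independent of $\Gamma_4$. These adjacencies are exactly the edge set of the graph $H$ depicted in Figure~\ref{graph-H-of-Dic-pk} (under the relabelling of nodes given there), yielding $\Theta(Q_{p^m})\cong H[\Gamma_1,\Gamma_2,\Gamma_3,\Gamma_4,\Gamma_5]$. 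No step is conceptually difficult; the main point requiring care is the observation that $p$ odd forces all elements of $Q_{p^m}\setminus\langle a\rangle$ into a single order-$4$ class, since this is what makes $\Gamma_5$ universally adjacent to the other independent parts.
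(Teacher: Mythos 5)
Your proposal is correct and follows essentially the same route as the paper: the identical five-part partition of $Q_{p^m}$ (by order classes $2p$, $p^j$, $2p^j$ for $2\le j\le m$, and the coset $Q_{p^m}\setminus\langle a\rangle$ of order-$4$ elements), the same cardinality counts via Theorems~\ref{th_numb-of-elem-cyclic-grp} and \ref{th_euler-phi-formula}, and the same gcd-based adjacency/independence checks matching the edges of $H$. The only difference is that you spell out the order-$4$ computation for elements outside $\langle a\rangle$, which the paper leaves implicit.
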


\begin{proof}[\textbf{Proof}]
Assume that $m\ge 2$. Set $n = p^m$, and define
	\begin{align*}
		S_1 &=  S(Q_n)& S_4 &=\{g\in \langle a \rangle \colon |g|=2p^j,\ 2\leq j \leq m\}\\
		S_2 &=\{g\in \langle a \rangle \colon |g|= 2p\} & S_5 &=  Q_n\setminus \langle a \rangle.\\ 
		S_3 &=\{g\in \langle a \rangle \colon |g|=p^j,\   2\leq j\leq m\} 		
	\end{align*}
By construction, the collection $\{S_1, S_2, \ldots, S_5\}$ forms a partition of $Q_{n}$. Note that $|S_5| = 2n$. By combining Theorems~\ref{th_numb-of-elem-cyclic-grp} and \ref{th_euler-phi-formula}, we obtain $|S_1|=p+1$, $|S_2|=p-1$, and $|S_3|= |S_4| =n-p$. Now observe for each $i\in \{2,3,4,5\}$ that the gcd of the orders of any two distinct elements in $S_i$ is a composite number. Therefore $\Gamma_2\colonequals \Theta(Q_n)[S_2]\cong E_{p-1}$, $\Gamma_3\colonequals\Theta(Q_n)[S_3]\cong E_{n-p}$, $\Gamma_4\colonequals\Theta(Q_n)[S_4]\cong E_{n-p}$, and $\Gamma_5\colonequals\Theta(Q_n)[S_5]\cong E_{2n}$. By Theorem~\ref{th_singletion-dominating-set}, we thus obtain $\Gamma_1\colonequals \Theta(Q_{n})[S_1] \cong K_{p+1}$ and $\Gamma_1 \sim \Gamma_i$ for each $i\in \{2,3,4,5\}$. Furthermore, observe that $\Gamma_2 \sim \Gamma_3$ and $\Gamma_5\sim \Gamma_i$ for each $i \in \{2,3,4\}$. Finally, observe that $\Gamma_4$ is independent of both $\Gamma_2$ and $\Gamma_3$. Hence $\Theta(Q_n)\cong H[\Gamma_1, \Gamma_2, \ldots, \Gamma_5]$, where $H$ is given in Figure~\ref{graph-H-of-Dic-pk}.
\end{proof}

\section{Hamiltonian Graphs}
For distinct primes $p, q$ with $p < q$, Adhikari and Banerjee \cite[Theorem~3.10]{adhikari-nsjom22} proved that $\Theta(\mathbb{Z}_{pq})$ is Hamiltonian if and only if $p = 2$. Motivated by this fact, we determine when the graph $\Theta(\mathbb{Z}_{n})$ is Hamiltonian. Furthermore, we determine when each of the graphs $\Theta(D_{n})$ and $\Theta(Q_{n})$ is Hamiltonian. We begin by recalling two fundamental properties of Hamiltonian graphs.

\begin{proposition}\label{pr_necessary-for-Hamiltonian}\cite[Proposition 7.2.3]{west-b01} 
	If $\Gamma$ is Hamiltonian graph, then $c(\Gamma-S) \leq |S|$ for all nonempty proper subsets $S$ of $V(\Gamma)$. 
\end{proposition}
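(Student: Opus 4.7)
The plan is to use the Hamiltonian cycle directly as a "spanning backbone" and exploit that deleting $k$ vertices from a cycle leaves at most $k$ arcs.

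First I would let $C$ denote a Hamiltonian cycle of $\Gamma$, so that $V(C)=V(\Gamma)$. Given a nonempty proper subset $S\subset V(\Gamma)$ with $|S|=k$, I would traverse the cycle $C$ in a fixed cyclic orientation and mark the positions of the vertices of $S$. Deleting these $k$ marked vertices from the cycle splits it into at most $k$ vertex-disjoint paths $P_1,P_2,\ldots,P_t$ with $t\le k$; some of these paths may be empty (this happens precisely when two consecutive vertices of $C$ both lie in $S$), but that only strengthens the bound. Together, $P_1,\ldots,P_t$ contain every vertex of $V(\Gamma)\setminus S$.

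Next, I would observe that each nonempty $P_i$ is a path in $\Gamma-S$: its edges come from $C\subseteq \Gamma$, and none of its endpoints or internal vertices lie in $S$ by construction. Hence every $P_i$ is contained in a single connected component of $\Gamma-S$. Since every vertex of $\Gamma-S$ belongs to some $P_i$, the components of $\Gamma-S$ are obtained by amalgamating the $P_i$'s (possibly further connected by edges of $\Gamma$ outside $C$), and therefore
\[c(\Gamma-S)\le t\le k=|S|.\]

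The argument is essentially a one-line geometric observation about cycles, so there is no genuine obstacle; the only mild subtlety is the edge case where consecutive vertices of $C$ belong to $S$, which I would handle simply by noting that empty arcs contribute nothing to $c(\Gamma-S)$ and that $t\le|S|$ remains valid. No further machinery is needed, and the conclusion holds for every nonempty proper $S\subseteq V(\Gamma)$.
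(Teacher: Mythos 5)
Your argument is correct and complete: deleting the $k$ vertices of $S$ from a Hamiltonian cycle leaves at most $k$ arcs, each of which lies inside a single component of $\Gamma-S$, and every vertex of $\Gamma-S$ lies on one of these arcs, so $c(\Gamma-S)\le k=|S|$. The paper does not prove this statement at all — it simply cites it from West — and your proof is essentially the standard one found in that reference, so there is nothing to reconcile.
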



\begin{theorem}\label{dirak_theorem}\cite[Theorem 7.2.8]{west-b01} 
	If $\Gamma$ is a graph of order $n\geq 3$ with minimum degree $\delta(\Gamma)\geq n/2$, then $\Gamma$ is Hamiltonian. 
\end{theorem}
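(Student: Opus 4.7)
The plan is to carry out the standard longest-path proof of Dirac's theorem. Let $\Gamma$ be a graph on $n\ge 3$ vertices with $\delta(\Gamma)\ge n/2$. I would first verify that $\Gamma$ is connected: if it had two or more components, the smallest component would have at most $\lfloor n/2 \rfloor$ vertices, forcing each of its vertices to have degree strictly less than $n/2$, contradicting the hypothesis.

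Next, I would fix a longest path $P\colon v_1 v_2 \cdots v_k$ in $\Gamma$. By maximality of $P$, every neighbor of $v_1$ and every neighbor of $v_k$ lies on $P$; in particular $k-1 \ge \delta(\Gamma) \ge n/2$. The crux is to show that $\Gamma$ contains a cycle on all $k$ vertices of $P$. Define
\[A = \{i \in \{1,\ldots,k-1\} \colon v_1 v_{i+1} \in E(\Gamma)\}, \qquad B = \{i \in \{1,\ldots,k-1\} \colon v_i v_k \in E(\Gamma)\}.\]
Then $|A| = \deg_{\Gamma}(v_1) \ge n/2$ and $|B| = \deg_{\Gamma}(v_k) \ge n/2$, so $|A| + |B| \ge n > k-1 = |\{1,\ldots,k-1\}|$. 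By pigeonhole $A \cap B \ne \emptyset$; picking $i \in A \cap B$, the edges $v_1 v_{i+1}$ and $v_i v_k$ close $P$ into the cycle $C\colon v_1 v_2 \cdots v_i v_k v_{k-1} \cdots v_{i+1} v_1$ of length $k$.

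Finally, I would argue $k = n$. If not, by connectivity some vertex $u \notin V(C)$ is adjacent to some $v_j \in V(C)$. Traversing $C$ starting from $v_j$ (in a suitable direction) and appending $u$ at the end yields a path on $k+1$ vertices, contradicting the maximality of $P$. Hence $k = n$ and $C$ is a Hamiltonian cycle of $\Gamma$.

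The main obstacle is the pigeonhole/crossing step identifying $i \in A \cap B$: the off-by-one indexing between $A$ and $B$ must be handled carefully so that the two chords $v_1 v_{i+1}$ and $v_i v_k$ allow the traversal $v_1 \to \cdots \to v_i$ forward followed by $v_k \to \cdots \to v_{i+1}$ backward to close properly. Once this cycle of length $k$ is in hand, the connectivity-based extension to a longer path is routine.
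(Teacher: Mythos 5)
This statement is Dirac's theorem, which the paper does not prove at all: it is quoted verbatim from West \cite[Theorem~7.2.8]{west-b01} as a known tool for the Hamiltonicity results in Section~4. Your argument is the standard longest-path proof of that theorem and it is correct: the connectivity step, the sets $A$ and $B$ with $|A|=\deg_\Gamma(v_1)$ and $|B|=\deg_\Gamma(v_k)$ (both valid because maximality of $P$ confines all neighbours of the endpoints to $P$), the pigeonhole conclusion $A\cap B\neq\varnothing$ from $|A|+|B|\ge n>k-1$, the resulting $k$-cycle, and the final extension argument are all handled properly, including the off-by-one indexing you flag. Since the paper offers no proof to compare against, there is nothing further to reconcile.
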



The following theorem characterizes when the graph $\Theta(\mathbb{Z}_n)$ is Hamiltonian.

\begin{theorem}
	The graph $\Theta(\mathbb{Z}_{n})$ is Hamiltonian if and only if $n \in \{4, p, 2p\}$, where $p$ is an odd prime.
\end{theorem}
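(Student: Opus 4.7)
The plan is to prove sufficiency and necessity separately. For sufficiency, I handle the three families one at a time. If $n=p$ is an odd prime, then $\Z_p$ is an EPO-group, so Theorem~\ref{th_group-EPO-iff-complete} gives $\Theta(\Z_p)\cong K_p$, which is Hamiltonian since $p\ge 3$. If $n=4$, the vertex sequence $0\to 1\to 2\to 3\to 0$ traces a Hamiltonian cycle in $\Theta(\Z_4)$, since each consecutive pair of orders has $\gcd$ equal to $1$ or $2$. If $n=2p$ with $p$ odd, Proposition~\ref{pr_group-of-2-distnct-prime} gives $\Theta(\Z_{2p})\cong K_{p+1}\vee E_{p-1}$, whose minimum degree equals $p+1\ge p=n/2$, so Theorem~\ref{dirak_theorem} applies.

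For necessity, I first dispose of $n\in\{1,2\}$ on size grounds (fewer than three vertices), and when $n$ is prime the EPO structure $\Theta(\Z_n)=K_n$ is Hamiltonian only when $n\ge 3$, i.e., when $n$ is odd. For composite $n\ge 3$, the key observation is that each generator $g$ of $\Z_n$ is adjacent in $\Theta(\Z_n)$ \emph{only} to vertices of $S(\Z_n)$: for every $h\in\Z_n$ one has $\gcd(|g|,|h|)=\gcd(n,|h|)=|h|$, and this is $1$ or prime exactly when $h\in S(\Z_n)$. Hence all $\varphi(n)$ generators become isolated vertices of $\Theta(\Z_n)-S(\Z_n)$, so Proposition~\ref{pr_necessary-for-Hamiltonian} forces
\[\varphi(n)\le c\bigl(\Theta(\Z_n)-S(\Z_n)\bigr)\le |S(\Z_n)|=1+\sum_{p\mid n,\ p\text{ prime}}(p-1).\]

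I then run a case analysis on the prime factorization $n=p_1^{a_1}\cdots p_k^{a_k}$, using Theorems~\ref{th_numb-of-elem-cyclic-grp} and~\ref{th_euler-phi-formula}, to show that for composite $n$ this inequality forces $n\in\{4,\,2p,\,12,\,30\}$. The case $k=1$ reduces to $p^{m-1}(p-1)\le p$, giving only $n=4$; the case $k=2$ with $a_1=a_2=1$ reduces to $(p-2)(q-2)\le 2$, giving $n=2p$; the case $k=2$ with some $a_i\ge 2$ admits only $n=12$; the squarefree case $k=3$ reduces to $(p-1)(q-1)(r-1)\le p+q+r-2$, whose only solution is $n=30$; and in all remaining cases $\varphi(n)$ strictly exceeds $|S(\Z_n)|$.

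To finish, I rule out $n=12$ and $n=30$ by refining the component count via the structural decompositions already established. Proposition~\ref{pr_H-join-cyclic-pqm} with $(p,q,m)=(3,2,2)$ implies that $\Theta(\Z_{12})-\Gamma_1$ consists of a single connected piece from $\Gamma_2\cup\Gamma_3$ together with four isolated vertices from $\Gamma_4$, giving $c=5>4=|\Gamma_1|$; analogously, Proposition~\ref{pr_H-join-cyclic-pqr} with $(p,q,r)=(2,3,5)$ yields $c=1+8=9>8=|\Gamma_1|$ for $\Theta(\Z_{30})$. In both cases Proposition~\ref{pr_necessary-for-Hamiltonian} is violated, completing the necessity. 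The main technical obstacle is the borderline enumeration in the previous paragraph: one must check rigorously that the equality $\varphi(n)=|S(\Z_n)|$ is attained among composite $n$ only at $n=12$ and $n=30$, so that no further exceptional orders need separate treatment.
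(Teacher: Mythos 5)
Your proof is correct and follows essentially the same route as the paper: Dirac's theorem (or explicit structure) for $n\in\{4,p,2p\}$, and for the converse the observation that the $\varphi(n)$ generators of $\Z_n$ become isolated vertices of $\Theta(\Z_n)-S(\Z_n)$, so that Proposition~\ref{pr_necessary-for-Hamiltonian} forces $\varphi(n)\le |S(\Z_n)|=1+\sum_i(p_i-1)$, with $n=12$ and $n=30$ then excluded by the sharper component counts coming from Propositions~\ref{pr_H-join-cyclic-pqm} and~\ref{pr_H-join-cyclic-pqr} (your counts $5>4$ and $9>8$ match the paper's). The one genuine difference is how the inequality $\varphi(n)>1+\sum_i(p_i-1)$ is established for composite $n\notin\{4,2p,12,30\}$: the paper outsources it to an unpublished reference \cite{shubh_rg25}, whereas you propose a direct case analysis on the factorization. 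Your borderline computations ($p^{m-1}(p-1)\le p$ only for $n=4$; $(p-2)(q-2)\le 2$ only for $p=2$; the lone solutions $12$ and $30$ in the mixed and three-prime cases) all check out, and carrying this out in full would make the proof self-contained, which is arguably an improvement. Two small points of care: the remaining cases ($k=3$ non-squarefree and $k\ge 4$) are asserted rather than proved, and your closing sentence slightly misstates the target --- what must be verified is that the \emph{inequality} $\varphi(n)\le |S(\Z_n)|$ fails for every composite $n$ outside $\{4,2p,12,30\}$, not merely that equality occurs only at $12$ and $30$. Neither issue is a gap in the argument's logic, only in its level of detail.
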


\begin{proof}[\textbf{Proof}]
Assume that $n \in \{4, p, 2p\}$, where $p$ is an odd prime. If $n=4$, then $\Theta(\mathbb{Z}_n)\cong K_2\vee E_2$. Therefore $\delta\big(\Theta(\mathbb{Z}_n)\big)=n/2$, and hence $\Theta(\mathbb{Z}_n)$ is Hamiltonian by Theorem~\ref{dirak_theorem}. If $n=p$, then $\Theta(\mathbb{Z}_n)\cong K_n$ by Theorem~\ref{th_group-EPO-iff-complete}. Since every complete graph with at least three vertices is Hamiltonian, it follows that $\Theta(\mathbb{Z}_n)$ is Hamiltonian. If $n=2p$, then $\Theta(\mathbb{Z}_{n})\cong K_{p+1}\vee E_{p-1}$ by Proposition~\ref{pr_group-of-2-distnct-prime}. Therefore $\delta\big(\Theta(\mathbb{Z}_{n})\big)=p+1> n/2$, and hence $\Theta(\mathbb{Z}_{n})$ is Hamiltonian by Theorem~\ref{dirak_theorem}.
	
	\vspace{0.4mm}
	To prove the converse, we will show that its contrapositive holds true. Assume that $n\notin \{4, p, 2p\}$, where $p$ is an odd prime. The graph $\Theta(\mathbb{Z}_n)$ is non-Hamiltonian for $n = 1$ and $n=2$. If $n = 12$, then $\Theta(\mathbb{Z}_n) \cong  H[K_4, E_2,E_2,E_4]$ by Proposition~\ref{pr_H-join-cyclic-pqm}, where $H$ is given in Figure~\ref{graph-H-of-cyclic-pqm}. Since $c\big(H[K_4, E_2,E_2,E_4] - V(K_4)\big) > |V(K_4)|$, it follows from Theorem~\ref{dirak_theorem} that $\Theta(\mathbb{Z}_n)$ is non-Hamiltonian. Next if 
	$n = 30$, then $\Theta(\mathbb{Z}_n) \cong  H[K_8, E_2, E_8, E_4, E_8]$ by Proposition~\ref{pr_H-join-cyclic-pqr}, where $H$ is given in Figure \ref{graph-H-of-cyclic-pqr}. Since $c\big(H[K_8, E_2, E_8, E_4, E_8] - V(K_8)\big) > |V(K_8)|$, it follows from Theorem~\ref{dirak_theorem} that 
	 $\Theta(\mathbb{Z}_n)$ is non-Hamiltonian.

	\vspace{0.4mm}	
	So, assume further that $n$ is distinct from $12$ and $30$. Let $p_1, p_2, \ldots, p_k$ be all distinct prime factors of $n$, and let $S_n\colonequals\{g\in \mathbb{Z}_n\colon |g|=n\}$. By combining Theorems~\ref{th_numb-of-elem-cyclic-grp} and \ref{th_euler-phi-formula}, we obtain $|S(\Z_n)|=1+\sum_{i=1}^{k}\varphi(p_i)$ and $|S_n|=\varphi(n)$. Moreover, observe that $\Theta(\mathbb{Z}_n)[S_n]\cong E_{\varphi(n)}$ and the subgraph $\Theta(\Z_n)-S(\Z_n)$ is independent of the subgraph $\Theta(\Z_n)[S_n]$. This implies that $c\big(\Theta(\Z_n)-S(\Z_n)\big)\geq \varphi(n)$. By combining Propositions~5 and 6 from \cite{shubh_rg25}, we have  $\varphi(n)>1+\sum_{i=1}^{k}\varphi(p_i)$. Therefore, since $|S(\Z_n)| = 1+\sum_{i=1}^{k}\varphi(p_i)$, we obtain $c\big(\Theta(\Z_n)-S(\Z_n)\big) > |S(\Z_n)|$. Hence $\Theta(\Z_n)$ is non-Hamiltonian by Proposition~\ref{pr_necessary-for-Hamiltonian}. 	
\end{proof}

The theorem below asserts that the graph $\Theta(D_n)$ is Hamiltonian for all $n$.

\begin{theorem}
	The graph $\Theta(D_n)$ is Hamiltonian.
\end{theorem}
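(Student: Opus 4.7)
The plan is to exploit the abundance of involutions in $D_n$. Each reflection $a^i b$ for $0 \le i \le n-1$ satisfies
\[(a^i b)^2 = a^i (b a^i) b = a^i a^{-i} b^2 = 1\]
by the defining relation $b^{-1}ab = a^{-1}$, so $D_n$ contains at least $n$ elements of order $2$. By Theorem~\ref{th_singletion-dominating-set}, every reflection is a dominating vertex of $\Theta(D_n)$, and this gives more than enough ``connector vertices'' to stitch the remaining elements into a Hamilton cycle by interleaving rotations with reflections.

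Concretely, I would exhibit the cycle
\[1,\ b,\ a,\ ab,\ a^2,\ a^2 b,\ \ldots,\ a^{n-1},\ a^{n-1}b,\ 1,\]
which visits each element of $D_n$ exactly once, and observe that every consecutive pair in this sequence contains a reflection as at least one endpoint. Since each reflection is a dominating vertex of $\Theta(D_n)$, every such pair is an edge, so the sequence is indeed a Hamilton cycle.

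There is essentially no obstacle; once one notices that at least half of $D_n$ consists of involutions, the interleaving construction is immediate. No case analysis on $n$ is required, and the argument covers the smallest case $n=3$ (where $\Theta(D_3) \cong K_6$ is Hamiltonian anyway by Theorem~\ref{th_group-EPO-iff-complete}) uniformly with all larger $n$. Alternatively, one could appeal to Theorem~\ref{dirak_theorem} after noting that the $n$ dominating reflections alone force $\delta(\Theta(D_n)) \ge n = |D_n|/2$, but the explicit Hamilton cycle above is more informative and equally short.
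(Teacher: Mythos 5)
Your proof is correct, but your primary argument is a genuinely different (and more constructive) route than the paper's. The paper simply notes that $D_n$ has at least $n$ involutions, so together with the identity $|S(D_n)|\geq n+1$; by Theorem~\ref{th_singletion-dominating-set} these are all dominating vertices, giving $\delta\bigl(\Theta(D_n)\bigr)\geq n+1>|D_n|/2$, and Hamiltonicity follows at once from Dirac's condition (Theorem~\ref{dirak_theorem}) --- exactly the alternative you sketch in your last sentence. Your main argument instead exhibits the explicit Hamiltonian cycle $1,\,b,\,a,\,ab,\,a^2,\,a^2b,\,\ldots,\,a^{n-1},\,a^{n-1}b,\,1$, and this checks out: the sequence lists all $2n$ elements exactly once, every consecutive pair (including the closing edge $a^{n-1}b\,$--$\,1$) contains a reflection, and each reflection has order $2$ and hence is a dominating vertex by Theorem~\ref{th_singletion-dominating-set}, so every consecutive pair is an edge. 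Both arguments rest on the same key observation that half of $D_n$ consists of involutions; the paper's version is shorter given that Dirac's theorem is already quoted, while yours is self-contained modulo Theorem~\ref{th_singletion-dominating-set} and actually produces the cycle, which is more informative. One cosmetic point: in the Dirac variant you only claim $\delta\geq n$ from the reflections alone, which meets the threshold $|D_n|/2=n$ exactly and so still suffices, but counting the identity as the paper does gives the slightly cleaner strict inequality.
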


\begin{proof}[\textbf{Proof}]
	Note that $D_n$ contains at least $n$ involutions, and so $|S(D_n)|\geq n+1$. Therefore, by Theorem~\ref{th_singletion-dominating-set} we  obtain $\delta\big(\Theta(D_n)\big)\geq n+1$. Hence, since $|D_n| = 2n$, the graph $\Theta(D_n)$ is Hamiltonian by Theorem~\ref{dirak_theorem}.
\end{proof}


The theorem below characterizes when the graph $\Theta(Q_n)$ is Hamiltonian.

\begin{theorem}
The graph $\Theta(Q_n)$ is Hamiltonian if and only if $n$ is odd.
\end{theorem}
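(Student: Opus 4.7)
The plan is to handle the two directions with the two tools stated just above: Dirac's theorem for sufficiency ($n$ odd) and the toughness-style necessary condition in Proposition~\ref{pr_necessary-for-Hamiltonian} for necessity ($n$ even). The decisive structural fact I will need about $Q_n$ is that every element outside $\langle a\rangle$ has the form $a^ib$ and satisfies $(a^ib)^2 = a^n$; since $a^n$ is the unique involution of $Q_n$, each of the $2n$ elements of $Q_n\setminus\langle a\rangle$ has order exactly $4$.

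For the direction ``$n$ odd $\Rightarrow$ Hamiltonian,'' I would first observe that $4\nmid 2n$ when $n$ is odd, so no element of $\langle a\rangle$ has order divisible by $4$; every order $d$ appearing in $\langle a\rangle$ is either odd or twice an odd number, and hence $\gcd(4,d)\in\{1,2\}$. Combined with the structural fact above, this shows that every element of $\langle a\rangle$ is adjacent in $\Theta(Q_n)$ to every one of the $2n$ elements of $Q_n\setminus\langle a\rangle$, and symmetrically. Thus every vertex has degree at least $2n=|Q_n|/2$, and Theorem~\ref{dirak_theorem} delivers Hamiltonicity (noting $|Q_n|=4n\geq 12\geq 3$).

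For the direction ``$n$ even $\Rightarrow$ non-Hamiltonian,'' I would write $n=2^k m$ with $k\geq 1$ and $m$ odd, and split $Q_n$ into the set $T$ of elements whose order is divisible by $4$ and its complement $S\colonequals Q_n\setminus T$. Any two elements of $T$ have orders whose gcd is a multiple of $4$ and hence composite, so $T$ is an independent set of $\Theta(Q_n)$ and contributes at least $|T|$ components when $S$ is removed. To count $|S|$, note that every element of $Q_n\setminus\langle a\rangle$ has order $4$ and so lies in $T$; hence $S$ is contained in $\langle a\rangle$, and consists exactly of those elements of the cyclic group $\langle a\rangle\cong\Z_{2n}$ whose order is not divisible by $4$. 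Since $2n=2^{k+1}m$, these are precisely the elements whose order divides $2m$, i.e.\ the unique subgroup of $\langle a\rangle$ of order $2m$. Thus $|S|=2m$ and $|T|=4n-2m$. The inequality $|T|>|S|$ is equivalent to $n>m$, which holds because $k\geq 1$, and Proposition~\ref{pr_necessary-for-Hamiltonian} then rules out Hamiltonicity (the set $S$ being a nonempty proper subset as it contains the identity but avoids the elements of order $4$).

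The main obstacle I anticipate is identifying the right separator $S$ in the even case: one has to notice that it is exactly the ``order divisible by $4$'' vertices — pooling both the $2n$ elements outside $\langle a\rangle$ and the elements of $\langle a\rangle$ of order $2^j d$ with $j\geq 2$ and $d\mid m$ — that clump into one large independent set, and that their count dominates the rest precisely when $n$ is even. Once that is seen, the count $|S|=2m$ drops out cleanly by recognising $S$ as the unique subgroup of order $2m$ in $\langle a\rangle$, and the sufficiency half is then essentially forced, since the absence of order-$4$ elements inside $\langle a\rangle$ (for odd $n$) automatically makes every vertex of $\Theta(Q_n)$ reach at least half the group.
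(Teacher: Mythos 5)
Your proof is correct, and while the sufficiency half ($n$ odd) coincides with the paper's argument — both establish $\Theta(Q_n)\cong\Theta(\Z_{2n})\vee E_{2n}$ in substance and invoke Dirac's theorem via $\delta=2n=|Q_n|/2$ — your necessity half takes a genuinely different and, in one respect, cleaner route. The paper deletes the set $S(Q_n)$ of elements of order $1$ or prime and isolates the $\varphi(2n)$ elements of order $2n$; to get $c\bigl(\Theta(Q_n)-S(Q_n)\bigr)>|S(Q_n)|$ it must invoke the inequality $\varphi(2n)>1+\sum_{i}\varphi(p_i)$ from an unpublished note, and the inequality fails for small parameters, forcing the cases $n=2$ and $n=6$ to be dispatched separately by explicit $H$-join decompositions. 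You instead delete $S=\{g\in Q_n\colon 4\nmid |g|\}$, observe that the remainder $T$ is an independent set because any two of its members have orders with gcd divisible by $4$, and compute $|S|=2m$ and $|T|=4n-2m$ exactly by recognising $S$ as the unique subgroup of order $2m$ in $\langle a\rangle$ (writing $n=2^km$ with $m$ odd); the inequality $4n-2m>2m$ reduces to $n>m$, which is immediate from $k\ge 1$. This handles all even $n\ge 2$ uniformly, with no special cases and no appeal to an external estimate on Euler's function — your separator captures \emph{all} vertices of order divisible by $4$ (the $2n$ elements outside $\langle a\rangle$ together with those inside), which is a strictly larger independent set than the paper's and is what makes the count come out so cheaply. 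The trade-off is that the paper's choice of cut set is the one it reuses verbatim for $\Theta(\Z_n)$, so its argument is parallel across the two theorems, whereas yours exploits structure specific to $Q_n$.
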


\begin{proof}[\textbf{Proof}]
Assume that $n$ is odd. Note that $\langle a \rangle$ is a cyclic subgroup of order $2n$ in $Q_n$. Therefore $\Theta(Q_n)[\langle a \rangle]\cong \Theta(\Z_{2n})$.
Furthermore, since $|Q_n\setminus\langle a \rangle| = 2n$ and each element in $Q_n\setminus\langle a \rangle$ has order $4$, we obtain $\Theta(Q_n)[Q_n\setminus\langle a \rangle]\cong E_{2n}$. Since $n$ is odd, we obtain $\gcd(4, d) \in \{1, 2\}$ for each positive divisor $d$ of $2n$. Therefore $\Theta(Q_n)[Q_n\setminus \langle a \rangle] \sim \Theta(Q_n)[\langle a \rangle]$, and so $\Theta(Q_n)\cong \Theta(\Z_{2n}) \vee E_{2n}$. This implies that $\delta\big(\Theta(Q_n)\big)= 2n$. Hence, since $|Q_n|= 4n$, the graph $\Theta(Q_n)$ is Hamiltonian by Theorem~\ref{dirak_theorem}.  

\vspace{0.4mm}
	
To prove the converse part, we will show that its contrapositive holds true. Assume that $n$ is even. If $n= 2$, then $\Theta(Q_2)\cong K_2\vee E_6$ by Proposition~\ref{pr_H-join-of-generalized-quaternion-grp}. Since
	$c\big((K_2\vee E_6)-V(K_2)\big) = 6 > |V(K_2)|$, it follows from Proposition~\ref{pr_necessary-for-Hamiltonian} that $\Theta(Q_n)$ is non-Hamiltonian. Next if $n= 6$, then $\Theta(Q_6)\cong H[K_4,E_2,E_4,E_{14}]$ by Proposition~\ref{pr_graph-H-of-Dic-2p}, where $H$ is given in Figure~\ref{graph-H-of-Dic-2p}. Since 
	$c\big(H[K_{4},E_{2},E_{4},E_{14}]-V(K_4)\big) = 5 > |V(K_4)|$, it follows from Proposition~\ref{pr_necessary-for-Hamiltonian} that $\Theta(Q_n)$ is non-Hamiltonian.

	\vspace{0.4mm}
So, assume further that $n$ is distinct from $2$ and $6$. Let $p_1, p_2, \ldots, p_k$ be all distinct prime factors of $n$, and let $S_{2n}\colonequals\{g\in Q_n\colon |g|=2n\}$. By combining Theorems~\ref{th_numb-of-elem-cyclic-grp} and \ref{th_euler-phi-formula}, we obtain $|S(Q_n)|=1+\sum_{i=1}^{k}\varphi(p_i)$ and $|S_{2n}|=\varphi(2n)$. Furthermore, observe that $\Theta(Q_n)[S_{2n}] \cong E_{\varphi(2n)}$ and the subgraph $\Theta(Q_n)-S(Q_n)$ is independent of the subgraph $\Theta(Q_n)[S_{2n}]$. Therefore $c\big(\Theta(Q_n)-S(Q_n)\big)\geq \varphi(2n)$. By \cite[Proposition~6]{shubh_rg25}, we have $\varphi(2n)>1+\sum_{i=1}^{k}\varphi(p_i)$. Thus, since $|S(Q_n)| = 1+\sum_{i=1}^{k}\varphi(p_i)$, we obtain $c\big(\Theta(Q_n)-S(Q_n)\big) > |S(Q_n)|$. Hence $\Theta(Q_n)$ is non-Hamiltonian by Proposition~\ref{pr_necessary-for-Hamiltonian}. 
\end{proof}


\section{Clique Number}
In this section, we determine the clique number of each of the graphs $\Theta(\Z_n)$, $\Theta(D_n)$, and $\Theta(Q_n)$. Recall that the \emph{clique number} $\omega(\Gamma)$ of a graph $\Gamma$ is the cardinality of its maximum clique.


\vspace{0.4mm}

In the following theorem, we determines the clique number of the graph $\Theta(\Z_n)$ for all $n>1$.

\begin{theorem}\label{clique_Theta_mZn}
Let $n>1$ be an integer with canonical factorization  $n=p_1^{r_1}p_2^{r_2}\cdots p_k^{r_k}$. Then
	\vspace{-3mm}
	\begin{equation*}
		\omega\big(\Theta(\Z_n)\big)=1+\sum_{i=1}^{k}(p_i-1)+\displaystyle\sum_{\substack{i=1 \\p_i^2|n}}^k 1+ \frac{k(k-1)}{2}.
	\end{equation*}
\end{theorem}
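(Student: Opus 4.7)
The plan is to prove matching upper and lower bounds for $\omega(\Theta(\Z_n))$. The anchor observation is that by Theorem~\ref{th_singletion-dominating-set}, every element of $S(\Z_n)$ is a dominating vertex of $\Theta(\Z_n)$, so any maximum clique $K$ must satisfy $S(\Z_n)\subseteq K$; combining Theorems~\ref{th_numb-of-elem-cyclic-grp} and \ref{th_euler-phi-formula} gives $|S(\Z_n)|=1+\sum_{i=1}^{k}(p_i-1)$, which accounts for the first two terms of the claimed formula. It then remains to determine the maximum number of elements of $T(\Z_n)$ that can be adjoined to $S(\Z_n)$ while preserving the clique property.

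For the lower bound, I will exhibit an explicit clique $K$ by augmenting $S(\Z_n)$ with two families of vertices. First, for each index $i$ with $r_i\geq 2$ (equivalently $p_i^2\mid n$), choose one element $g_i$ of order $p_i^{r_i}$; this contributes $\sum_{p_i^2\mid n}1$ vertices. Second, for each pair $1\leq i<j\leq k$, choose one element $h_{ij}$ of order $p_ip_j$; the existence of such elements is guaranteed by Theorem~\ref{th_numb-of-elem-cyclic-grp} since $\varphi(p_ip_j)\geq 1$, and they contribute $\binom{k}{2}$ vertices. Routine case analysis on gcds then verifies that $K$ is a clique: two $g_i$'s for distinct primes have coprime orders; a $g_i$ and an $h_{j\ell}$ have gcd of orders equal to $1$ or $p_i$ according as $i\notin\{j,\ell\}$ or $i\in\{j,\ell\}$; and two distinct $h_{ij}$'s have gcd of orders equal to $1$ or a single common prime according to the overlap pattern of their index sets.

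For the matching upper bound, let $K$ be a maximum clique and put $K'\colonequals K\cap T(\Z_n)$. I will construct an injection $\phi$ from $K'$ into the disjoint union $I\sqcup P$, where $I=\{i : p_i^2\mid n\}$ and $P=\{(i,j) : 1\leq i<j\leq k\}$. Given $g\in K'$, its order is a composite divisor of $n$, so either $|g|=p_i^a$ is a prime power (necessarily with $a\geq 2$, which forces $p_i^2\mid n$ and hence $i\in I$), in which case I set $\phi(g)=i$; or $|g|$ has at least two distinct prime divisors, in which case I fix one such pair $p_i,p_j$ with $i<j$ and set $\phi(g)=(i,j)$. Injectivity of $\phi$ follows from the very same gcd considerations: if two distinct elements of $K'$ were sent to a common $i\in I$, both would have order divisible by $p_i^2$, making the gcd of their orders a multiple of $p_i^2$ and hence composite; analogously, a collision on some $(i,j)\in P$ would force $p_ip_j$ to divide the gcd, again composite. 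Both scenarios contradict the clique condition, so $|K'|\leq |I|+|P|=\sum_{p_i^2\mid n}1+\binom{k}{2}$, and adding $|S(\Z_n)|$ yields the desired upper bound.

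The only part that requires any care is the adjacency checking in the lower-bound construction, but it reduces to a handful of elementary gcd computations grouped by how the supports of the chosen composite orders interact; once those are in place, the injectivity in the upper bound is essentially automatic from the same observations, and the two bounds meet at the claimed value.
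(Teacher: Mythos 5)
Your proof is correct, and it rests on the same underlying observations as the paper's: every element of $S(\Z_n)$ is a dominating vertex, elements of equal composite order are pairwise non-adjacent, and the gcd of two composite orders is $1$ or a prime exactly when the orders share at most one prime and that prime appears to the first power in at least one of them. Where you differ is in the logical organization. The paper fixes a maximum clique $W$, partitions $T(\Z_n)$ into the order classes $S_d$, asserts $|W\cap S_{p_ip_j}|=1$ and $|W\cap S_{p_i^2}|=1$, and then runs a three-case analysis to show $W\cap S_d=\varnothing$ for all other composite $d$. You instead prove matching bounds: an explicit clique for the lower bound, and an injection from $K\cap T(\Z_n)$ into $I\sqcup P$ for the upper bound. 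Your structure is arguably tidier. In particular, the paper's claim that $|W\cap S_{p_ip_j}|$ \emph{equals} $1$ (rather than is at most $1$) is asserted before the exclusion cases are established, so as written it quietly presupposes that a maximum clique can always be augmented by such elements; your separation into an explicit construction plus an injectivity argument removes any such circularity, and also handles the prime case uniformly rather than as a separate appeal to Theorem~\ref{th_group-EPO-iff-complete}. The gcd verifications you defer to ``routine case analysis'' are exactly the computations the paper spells out, so nothing essential is missing.
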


\begin{proof}[ \textbf{Proof}]
The result follows directly from Theorem~\ref{th_group-EPO-iff-complete} when $n$ is prime. So, assume further that $n$ is composite. Let $W$ be a maximum clique of $\Theta(\Z_n)$. By Theorem~\ref{th_singletion-dominating-set}, it follows that $S(\Z_n)\subseteq W$. For each composite divisor $d$ of $n$, let $S_d$ denote the set of all elements of order $d$ in $\Z_n$. By Theorem~\ref{th_numb-of-elem-cyclic-grp}, we have $|S_d| = \varphi(d)$, and consequently, $\Theta(\Z_n)[S_d]\cong E_{\varphi(d)}$. Therefore $|W\cap S_d|\leq 1$. Furthermore, observe that the gcd of any two distinct products, each of them formed by multiplying two primes, is either one or a prime. Therefore, for all distinct indices $i, j$ with $1 \leq i < j \leq k$, it follows that $|W\cap S_{p_ip_j}|=1$, and $|W\cap S_{p_i^2}|=1$ whenever $p_i^2\mid n$. 
	
	\vspace{0.4mm}	
We now proceed to show that $W\cap S_d=\varnothing$ whenever $d$ is neither the square of a prime nor the product of two distinct primes. We consider the following three distinct cases:
	
	\vspace{0.4mm}	
	\noindent\textbf{Case 1:} $p_i^3\mid d$ for some prime $p_i$. Then $\gcd(d,p_i^2)=p_i^2$, and consequently no vertex of $\Theta(\Z_n)[S_d]$ is adjacent to the vertex in $W$ corresponding to an element of order $p_i^2$. Therefore $W\cap S_d =\varnothing$.

	\vspace{0.4mm}	
	\noindent\textbf{Case 2:} $p_i^ap_j^b \mid d$ for some distinct primes $p_i, p_j$, where $a,b\in \N$ with $a+b=3$. Then $\gcd(d,p_ip_j)=p_ip_j$, and consequently no vertex of $\Theta(\Z_n)[S_d]$ is adjacent to the vertex in $W$ corresponding to an element of order $p_ip_j$. Therefore $W\cap S_d =\varnothing$.

	\vspace{0.4mm}	
	\noindent\textbf{Case 3:} $p_ip_jp_\ell \mid d$ for some distinct primes $p_i, p_j, p_\ell$. Then $\gcd(d,p_ip_j)=p_ip_j$, and consequently no vertex of $\Theta(\Z_n)[S_d]$ is adjacent to the vertex in $W$ corresponding to an element of order $p_ip_j$. Therefore $W\cap S_d =\varnothing$.
	
	\vspace{0.4mm}	
	Thus
	\[W=S(\Z_n)\cup\bigg( \bigcup_{\substack{i=1 \\p_i^2|n}}^k\big(W\cap S_{p_i^2}\big)\bigg)\cup\bigg( \bigcup_{\begin{subarray}{l}
			i,j=1\\
			i<j
	\end{subarray}}^k \big(W\cap S_{p_ip_j}\big)\bigg).\]
Moreover, observe that   
$\big|\bigcup_{\begin{subarray}{l}
		i,j=1\\
		i<j
\end{subarray}}^k\big(W\cap S_{p_ip_j}\big)\big|=\frac{k(k-1)}{2}$. Also, we obtain $|S(\Z_n)|=1+\sum_{i=1}^{k}(p_i-1)$ by combining Theorems~\ref{th_numb-of-elem-cyclic-grp} and \ref{th_euler-phi-formula}. Hence \[ \omega\big(\Theta(\Z_n)\big)=|W|=1+\sum_{i=1}^{k}(p_i-1)+\displaystyle\sum_{\substack{i=1 \\p_i^2|n}}^k 1+ \frac{k(k-1)}{2}. \]
\end{proof}

In order to determine the clique number of the graph $\Theta(D_n)$, we first prove the following lemma.

\begin{lemma}\label{le_join-for-Dn}
We have $\Theta(D_n)\cong  \Theta(\Z_n) \vee K_n$.
\end{lemma}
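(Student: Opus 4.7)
The plan is to partition $D_n$ into its rotation subgroup $\langle a\rangle$ and its reflection coset $D_n\setminus\langle a\rangle$, and to identify each induced subgraph of $\Theta(D_n)$ with one of the two factors in the claimed join.

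First I would use the fact that $\langle a\rangle$ is a cyclic subgroup of $D_n$ of order $n$, so the map $a^i\mapsto i$ is a group isomorphism $\langle a\rangle\to\Z_n$. Since adjacency in $\Theta$ is determined purely by the orders of the vertices (via the gcd condition), this isomorphism induces a graph isomorphism $\Theta(D_n)[\langle a\rangle]\cong\Theta(\Z_n)$.

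Next I would handle the coset $D_n\setminus\langle a\rangle=\{a^ib:0\le i\le n-1\}$. The standard dihedral computation $(a^ib)^2 = a^i(ba^i)b = a^i(a^{-i}b)b = b^2 = 1$ shows every such element is an involution, so each has prime order $2$. By Theorem~\ref{th_singletion-dominating-set}, every vertex in $D_n\setminus\langle a\rangle$ is a dominating vertex of $\Theta(D_n)$; in particular the $n$ involutions are pairwise adjacent, giving $\Theta(D_n)[D_n\setminus\langle a\rangle]\cong K_n$, and moreover every vertex of this subgraph is joined to every vertex of $\Theta(D_n)[\langle a\rangle]$.

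Combining these two observations with the partition $D_n=\langle a\rangle\sqcup(D_n\setminus\langle a\rangle)$ yields exactly the definition of the join, so $\Theta(D_n)\cong\Theta(\Z_n)\vee K_n$. There is no real obstacle here; the only point worth double-checking is the order computation for the reflections, which is immediate from the defining relation $b^{-1}ab=a^{-1}$.
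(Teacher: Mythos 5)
Your proof is correct and follows essentially the same route as the paper: split $D_n$ into $\langle a\rangle$ and its complement, identify $\Theta(D_n)[\langle a\rangle]$ with $\Theta(\Z_n)$, note the $n$ reflections are involutions and hence dominating vertices by Theorem~\ref{th_singletion-dominating-set}, and conclude the join structure. You merely spell out the involution computation and the appeal to the dominating-vertex theorem that the paper leaves implicit.
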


\begin{proof}[\textbf{Proof}]
	Note that $\langle a \rangle$ is a cyclic subgroup of order $n$ in $D_n$. Therefore $\Theta(D_n)[\langle a \rangle]\cong \Theta(\Z_n)$. Furthermore, since $|D_n\setminus\langle a \rangle| = n$ and each element in $D_n\setminus\langle a \rangle$ has order $2$, we obtain $\Theta(D_n)[D_n\setminus\langle a \rangle]\cong K_n$ and $\Theta(D_n)[D_n\setminus\langle a \rangle]\sim \Theta(D_n)[\langle a \rangle]$. Hence $\Theta(D_n)\cong  \Theta(\Z_n) \vee K_n$.
\end{proof}


\begin{corollary}
The clique number of the graph $\Theta(D_n)$ is given by
\[\omega\big(\Theta(D_n)\big)=n+\omega\big(\Theta(\Z_n)\big).\]
\end{corollary}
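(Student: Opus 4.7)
The plan is to exploit Lemma~\ref{le_join-for-Dn}, which identifies $\Theta(D_n)$ with the join $\Theta(\Z_n) \vee K_n$, and then invoke the standard fact that the clique number is additive under the join operation. So first I would cite Lemma~\ref{le_join-for-Dn} to write $\Theta(D_n)\cong \Theta(\Z_n) \vee K_n$, and then argue $\omega(\Theta(\Z_n) \vee K_n) = \omega(\Theta(\Z_n)) + \omega(K_n) = \omega(\Theta(\Z_n)) + n$.

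The one thing that needs to be justified (since the paper has not recorded it as a separate lemma) is the identity $\omega(\Gamma_1 \vee \Gamma_2) = \omega(\Gamma_1) + \omega(\Gamma_2)$ for vertex-disjoint graphs $\Gamma_1, \Gamma_2$. The argument is a short two-sided check. For the lower bound, if $W_i$ is a maximum clique of $\Gamma_i$, then $W_1 \cup W_2$ is a clique in $\Gamma_1 \vee \Gamma_2$ because all edges inside each $W_i$ are present by assumption and all cross-edges are present by the definition of join, so $\omega(\Gamma_1 \vee \Gamma_2) \geq \omega(\Gamma_1) + \omega(\Gamma_2)$. For the upper bound, let $W$ be a maximum clique of $\Gamma_1 \vee \Gamma_2$ and set $W_i \colonequals W \cap V(\Gamma_i)$; then each $W_i$ is a clique of $\Gamma_i$, and hence $|W| = |W_1| + |W_2| \leq \omega(\Gamma_1) + \omega(\Gamma_2)$.

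Combining the two steps and using $\omega(K_n) = n$ gives exactly the stated formula. There is no real obstacle here: once Lemma~\ref{le_join-for-Dn} is available, the corollary is a one-line consequence of the additivity of $\omega$ under $\vee$, and the value of $\omega(\Theta(\Z_n))$ itself has already been computed in Theorem~\ref{clique_Theta_mZn} should an explicit expression be desired.
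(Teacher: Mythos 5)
Your proposal is correct and matches the paper's own proof: both cite Lemma~\ref{le_join-for-Dn} to identify $\Theta(D_n)$ with $\Theta(\Z_n)\vee K_n$ and then apply the additivity of the clique number under the join, using $\omega(K_n)=n$. The only difference is that you spell out the two-sided verification of $\omega(\Gamma_1\vee\Gamma_2)=\omega(\Gamma_1)+\omega(\Gamma_2)$, which the paper treats as a standard fact.
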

\begin{proof}[\textbf{Proof}]
By Lemma~\ref{le_join-for-Dn}, the graph $\Theta(D_n)$ is isomorphic to $\Theta(\Z_n) \vee K_n$. Therefore 
\[\omega\big(\Theta(D_n)\big)=\omega\big(\Theta(\Z_n) \vee K_n\big) = \omega\big(\Theta(\Z_n) \big) +  \omega(K_n)= \omega\big(\Theta(\Z_n)\big)+n.\]
\end{proof}


The following proposition expresses the clique number of the graph $\Theta(Q_n)$ in terms of the clique number of $\Theta(\mathbb{Z}_{2n})$.

\begin{proposition}
The clique number of the graph $\Theta(Q_n)$ is given by
	\[\omega\bigl(\Theta(Q_n)\bigr)=\begin{cases}
		\omega\bigl(\Theta(\mathbb{Z}_{2n})\bigr) & \text{if } n \text{ is even},\\
		1+\omega\bigl(\Theta(\mathbb{Z}_{2n})\bigr) & \text{if } n \text{ is odd}.
	\end{cases}\]
\end{proposition}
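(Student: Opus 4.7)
The plan is to exploit the decomposition $Q_n = \langle a \rangle \sqcup (Q_n\setminus\langle a\rangle)$. First I would record two structural facts. Since $\langle a\rangle$ is cyclic of order $2n$, we have $\Theta(Q_n)[\langle a\rangle] \cong \Theta(\Z_{2n})$. Next, using the presentation of $Q_n$ one checks $(a^ib)^2 = a^ib\,a^ib = b^2 = a^n$ for every $i$; since $a$ has order $2n$, the element $a^n$ has order $2$, and hence every one of the $2n$ elements of $Q_n\setminus\langle a\rangle$ has order exactly $4$. Consequently $\Theta(Q_n)[Q_n\setminus\langle a\rangle]\cong E_{2n}$, and any clique of $\Theta(Q_n)$ contains at most one vertex from $Q_n\setminus\langle a\rangle$ (since $\gcd(4,4)=4$ is composite).

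For the case $n$ odd I would argue that $4\nmid 2n$, so no element of $\langle a\rangle$ has order divisible by $4$; therefore $\gcd(|u|,4)\in\{1,2\}$ for every $u\in\langle a\rangle$, meaning every vertex of $Q_n\setminus\langle a\rangle$ is adjacent to every vertex of $\langle a\rangle$ in $\Theta(Q_n)$. This gives the clean decomposition $\Theta(Q_n)\cong\Theta(\Z_{2n})\vee E_{2n}$ (the same structural identity used in the Hamiltonicity proof for $\Theta(Q_n)$). Because a clique contributes at most one vertex from $E_{2n}$, the maximum clique is obtained by adjoining one such vertex to a maximum clique of $\Theta(\Z_{2n})$, yielding $\omega(\Theta(Q_n))=1+\omega(\Theta(\Z_{2n}))$.

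For the case $n$ even, the inclusion $\Theta(Q_n)[\langle a\rangle]\cong\Theta(\Z_{2n})$ already gives the lower bound $\omega(\Theta(Q_n))\ge\omega(\Theta(\Z_{2n}))$. For the matching upper bound, let $W$ be a maximum clique of $\Theta(Q_n)$. If $W\subseteq\langle a\rangle$ the bound is immediate; otherwise write $W=\{v\}\cup W'$ with $v\in Q_n\setminus\langle a\rangle$ and $W'\subseteq\langle a\rangle$. Since $v$ has order $4$ and is adjacent to each $u\in W'$, we must have $\gcd(|u|,4)\in\{1,2\}$, so no element of $W'$ has order divisible by $4$. Because $n$ is even, $4\mid 2n$, and so $\langle a\rangle$ contains an element $w$ of order $4$, distinct from every element of $W'$. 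Then $\gcd(|w|,|u|)=\gcd(4,|u|)\in\{1,2\}$ for every $u\in W'$, so $\{w\}\cup W'$ is a clique in $\Theta(Q_n)[\langle a\rangle]\cong\Theta(\Z_{2n})$ of size $|W|$; thus $|W|\le\omega(\Theta(\Z_{2n}))$.

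No step is really hard: the crucial ingredient is simply that every element outside $\langle a\rangle$ has order exactly $4$, which forces both the ``at most one outside vertex'' constraint and the swap argument with an order-$4$ element of $\langle a\rangle$. The only mildly delicate point is the replacement step in the even case, where one must verify that the substituted element $w$ exists in $\langle a\rangle$ (guaranteed by $4\mid 2n$) and is genuinely new (guaranteed by $4\nmid|u|$ for $u\in W'$); this is what separates the two cases and produces the single-unit difference between the two formulas.
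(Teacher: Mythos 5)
Your proposal is correct and rests on the same decomposition as the paper's proof: $Q_n=\langle a\rangle\sqcup(Q_n\setminus\langle a\rangle)$, the isomorphism $\Theta(Q_n)[\langle a\rangle]\cong\Theta(\mathbb{Z}_{2n})$, the fact that every outside element has order $4$, and, for odd $n$, the join structure $\Theta(Q_n)\cong\Theta(\mathbb{Z}_{2n})\vee E_{2n}$. The one place you genuinely diverge is the upper bound in the even case. The paper takes a maximum clique $W$ of the induced subgraph $\Theta(Q_n)[\langle a\rangle]$, invokes Theorem~\ref{clique_Theta_mZn} to see that $W$ contains an element of order $4$, observes that this vertex is non-adjacent to everything outside $\langle a\rangle$, and concludes that $W$ is a maximum clique of $\Theta(Q_n)$; strictly read, this only establishes that $W$ is maximal (non-extendable), not that no differently-shaped clique could be larger. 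Your swap argument closes exactly this gap: starting from an arbitrary maximum clique of $\Theta(Q_n)$ that uses a vertex $v$ outside $\langle a\rangle$, you note that no member of $W'=W\setminus\{v\}$ has order divisible by $4$, and replace $v$ by an order-$4$ element of $\langle a\rangle$ (which exists since $4\mid 2n$ and is new since $4\nmid|u|$ for $u\in W'$) to land a clique of the same size inside $\langle a\rangle$. This yields the inequality $\omega(\Theta(Q_n))\le\omega(\Theta(\mathbb{Z}_{2n}))$ directly and without appealing to the structure theorem for maximum cliques of $\Theta(\mathbb{Z}_{2n})$, so your even-case argument is both more elementary and more airtight than the one in the paper.
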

\begin{proof}[\textbf{Proof}]
Note that $\langle a \rangle$ is a cyclic subgroup of order $2n$ in $Q_n$. Therefore $\Theta(Q_n)[\langle a \rangle]\cong \Theta(\mathbb{Z}_{2n})$, and so $\omega\big(\Theta(\mathbb{Z}_{2n})\big)\leq \omega\big(\Theta(Q_n)\big)$. Furthermore, since $|Q_n\setminus\langle a \rangle|=2n$ and each element in $Q_n\setminus\langle a \rangle$ has order $4$, we obtain $\Theta(Q_n)[Q_n\setminus\langle a \rangle]\cong E_{2n}$. We now proceed by considering two distinct cases:

	\vspace{0.4mm}
	\noindent\textbf{Case 1:}  $n$ is odd. Then for each positive divisor $d$ of $2n$, we have $\gcd(4, d)\in \{1,2\}$. Since each element in $Q_n\setminus\langle a \rangle$ has order $4$, it follows that $\Theta(Q_n)[\langle a \rangle] \sim  \Theta(Q_n)[Q_n\setminus\langle a \rangle]$. Therefore $\Theta(Q_n)\cong  \Theta(\mathbb{Z}_{2n}) \vee E_{2n}$. Hence
	$\omega\big(\Theta(Q_n)\big)= \omega\big(\Theta(\mathbb{Z}_{2n})\big) + \omega(E_{2n})=
	\omega\big(\Theta(\mathbb{Z}_{2n})\big)+1$.

	\vspace{0.4mm}
	\noindent\textbf{Case 2:} $n$ is even. Then $4 \mid 2n$, and so $\langle a \rangle$ contains $\varphi(4) = 2$ elements of order $4$ by Theorem~\ref{th_numb-of-elem-cyclic-grp}. Let $W$ be a maximum clique of the subgraph $\Theta(Q_n)[\langle a \rangle]$. By Theorem~\ref{clique_Theta_mZn}, the clique $W$ contains exactly one vertex $v$ corresponding to an element of order $4$ in $\langle a \rangle$. Furthermore, since each element in $Q_n\setminus\langle a \rangle$ has order $4$, the vertex $v$ is non-adjacent to every vertex of the subgraph $\Theta(Q_n)[Q_n \setminus \langle a \rangle]$. Therefore $W$ is a maximum clique of $\Theta(Q_n)$. Hence $\omega\bigl(\Theta(Q_n)\bigr)=|W|=\omega\bigl(\Theta(Q_n)[\langle a \rangle]\bigr)= \omega\bigl(\Theta(\mathbb{Z}_{2n})\bigr)$.
\end{proof}


\section{Vertex Degree}

By Theorem~\ref{th_singletion-dominating-set}, every element $g$ of the subset $S(G)$ of a group $G$ has degree $\deg_{\Theta(G)}(x)= |G|-1$ in the graph $\Theta(G)$.

\vspace{0.4mm}
In this section, we determine the degrees of \emph{composite-order vertices} of each of the graphs $\Theta(\Z_n)$, $\Theta(D_n)$, and $\Theta(Q_n)$, where a \emph{composite-order vertex} of the graph $\Theta(G)$ is one that represents
an element of composite order in the group $G$.

\vspace{0.4mm}
In order to determine the degree of any vertex of the graph $\Theta(\Z_n)$, we require the following lemma.

\begin{lemma}\label{Le_sum_phi(d)}
Let $n > 1$ be an integer with canonical factorization $n=\prod_{i=1}^kp_i^{\alpha_i}$. Then \[\displaystyle \sum_{d\mid n}\varphi(d)=\displaystyle\sum_{\substack{a_i\in \{0,1\}\\1 \leq i \leq k}}\bigg(\prod_{i=1}^{k}(p_i^{\alpha_i}-1)^{a_i}\bigg),\]
where $d$ runs through all positive divisors of $n$. 
\end{lemma}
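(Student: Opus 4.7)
The plan is to compute the left-hand side by exploiting the multiplicativity of Euler's totient function, and then to recognize the result as the expansion of a product that equals the right-hand side. Every positive divisor $d$ of $n$ has the unique form $d = \prod_{i=1}^{k} p_i^{\beta_i}$ with $0 \le \beta_i \le \alpha_i$, and since $\varphi$ is multiplicative on coprime arguments, one has $\varphi(d) = \prod_{i=1}^{k} \varphi(p_i^{\beta_i})$ with the convention $\varphi(1) = 1$. Reindexing the sum over divisors as a sum over tuples $(\beta_1, \ldots, \beta_k)$ therefore yields
\[\sum_{d \mid n} \varphi(d) \;=\; \prod_{i=1}^{k} \left(\sum_{\beta_i = 0}^{\alpha_i} \varphi(p_i^{\beta_i})\right).\]

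Next, for each fixed index $i$ I would invoke Theorem~\ref{th_euler-phi-formula} to write the inner sum as a telescoping sum:
\[\sum_{\beta_i = 0}^{\alpha_i} \varphi(p_i^{\beta_i}) \;=\; 1 + (p_i - 1) + (p_i^2 - p_i) + \cdots + (p_i^{\alpha_i} - p_i^{\alpha_i - 1}) \;=\; p_i^{\alpha_i} \;=\; 1 + (p_i^{\alpha_i} - 1).\]
Substituting back gives
\[\sum_{d \mid n} \varphi(d) \;=\; \prod_{i=1}^{k} \bigl(1 + (p_i^{\alpha_i} - 1)\bigr).\]

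Finally, expanding this product by the distributive law produces a sum of $2^{k}$ terms indexed by tuples $(a_1, \ldots, a_k) \in \{0,1\}^{k}$, where $a_i = 1$ records that the factor $(p_i^{\alpha_i}-1)$ was selected from the $i$-th bracket and $a_i = 0$ records that the factor $1$ was selected:
\[\prod_{i=1}^{k} \bigl(1 + (p_i^{\alpha_i} - 1)\bigr) \;=\; \sum_{\substack{a_i \in \{0,1\} \\ 1 \le i \le k}} \prod_{i=1}^{k} (p_i^{\alpha_i} - 1)^{a_i},\]
which is precisely the right-hand side of the claimed identity. I do not anticipate any real obstacle: the argument is a short combination of the multiplicativity of $\varphi$ with an elementary telescoping sum and the familiar distributive expansion of a product of binomials.
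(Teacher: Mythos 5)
Your proof is correct and follows essentially the same route as the paper's: both factor the divisor sum over the prime powers of $n$, telescope $\sum_{j}\varphi(p_i^{j})$ to $p_i^{\alpha_i}=1+(p_i^{\alpha_i}-1)$ via Theorem~\ref{th_euler-phi-formula}, and then expand the resulting product of binomials into the sum over tuples $(a_1,\ldots,a_k)\in\{0,1\}^k$. If anything, your version is the tidier one: by letting each inner sum run from $\beta_i=0$ you avoid the redundant parametrization $d=\prod_{i}p_i^{a_i\beta_i}$ used in the paper, in which the exponent $\beta_i$ is immaterial whenever $a_i=0$.
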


\begin{proof}[\textbf{Proof}]

Note that each positive divisor $d$ of $n$ admits a representation of the form 
$d =\prod_{i=1}^{k} p_i^{a_i\beta_i}$, where $a_i \in \{0,1\}$ and $1\leq \beta_i\leq \alpha_i$ for each $i \in\{ 1,2,\ldots, k\}$ (cf. \cite[Theorem~6.1]{burton-b07}). By Theorem~\ref{th_euler-phi-formula}, we therefore obtain
	\[\varphi(d) =\varphi\bigg(\prod_{i=1}^{k} p_i^{a_i\beta_i}\bigg)= \prod_{i=1}^{k} \varphi\Big( p_i^{a_i\beta_i}\Big)= \prod_{i=1}^{k} \Big(\varphi\big( p_i^{\beta_i}\big)\Big)^{a_i}=
	\prod_{i=1}^{k}\Big(p_i^{\beta_i}-p_i^{\beta_i-1}\Big)^{a_i}.
	\] 
Hence
\begin{align*}
	\sum_{d\mid n} \varphi(d)&= \sum_{\substack{a_i\in \{0,1\} \\ 1\leq i\leq k}}\Bigg(\sum_{\substack{1\leq \beta_i\leq \alpha_i\\ 1\leq i\leq k}} \bigg(\prod_{i = 1}^k \big(p_i^{\beta_i}-p_i^{\beta_i-1}\big)^{a_i}\bigg)\Bigg) \\
	&= \sum_{\substack{a_i\in \{0,1\} \\ 1\leq i\leq k}}\Bigg(\prod_{i = 1}^k \bigg(\sum_{1\leq \beta_i\leq \alpha_i} \big(p_i^{\beta_i}-p_i^{\beta_i-1}\big)^{a_i}\bigg)\Bigg) \\
	&= \sum_{\substack{a_i\in \{0,1\} \\ 1\leq i\leq k}} \bigg(\prod_{i = 1}^k\Big( p_i^{\alpha_i}-1\Big)^{a_i} \bigg).
\end{align*}	

\end{proof}


\begin{theorem}\label{th_vert_deg_Zn}
Let $n$ be a composite number with canonical factorization $n=\prod_{i=1}^kp_i^{\alpha_i}$, and let $x\in \mathbb{Z}_n$ 
with composite order $\prod_{i=1}^{k}{p_i}^{\beta_i}$. Then 
	\vspace{-1mm}
	\begin{equation*}
		\deg_{\Theta(\Z_n)}(x)=
		\sum_{\Delta}\prod_{i=1}^k(p_i^{\gamma_i}-1)^{a_i},
		\vspace{-4mm}
	\end{equation*}
	where $\gamma_i=\begin{cases}
		1& \text{if } \beta_i \geq 2,\\
		\alpha_i &\text{if } \beta_i\leq 1,
	\end{cases}$
	and	$\Delta$ ranges over all choices of $a_i \in \{0,1\}$ satisfying
	 $\displaystyle\sum_{\substack{1\leq j\leq k\\ \beta_j\geq 1}}a_j\leq 1$.
\end{theorem}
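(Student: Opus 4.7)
The plan is to count the neighbors of $x$ in $\Theta(\Z_n)$ directly from the definition, organizing the count by the structure of $\gcd(|x|,|y|)$. Set $I=\{i:\beta_i\ge 1\}$ and $J=\{1,\dots,k\}\setminus I$; note that $I\ne\varnothing$ since $|x|$ is composite. For $y\in\Z_n$ write $|y|=\prod_{i=1}^k p_i^{\delta_i}$ with $0\le\delta_i\le\alpha_i$, so that $\gcd(|x|,|y|)=\prod_{i\in I} p_i^{\min(\beta_i,\delta_i)}$. This gcd is $1$ or a prime precisely when either (A) $\delta_i=0$ for every $i\in I$, giving $\gcd=1$; or (B) there is a unique $j\in I$ with $\min(\beta_j,\delta_j)=1$ and $\delta_i=0$ for every $i\in I\setminus\{j\}$, giving $\gcd=p_j$. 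These alternatives, further split over $j\in I$ in case (B), are pairwise disjoint.

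Using Theorem~\ref{th_numb-of-elem-cyclic-grp} together with the identity $\sum_{d\mid m}\varphi(d)=m$, count the admissible $y$'s in each case by exploiting multiplicativity of $\varphi$. The free coordinates $\delta_i$ for $i\in J$ contribute a factor $\prod_{i\in J} p_i^{\alpha_i}$ in both cases. In case (A) no further factor arises. In case (B) at a fixed $j\in I$, the admissible range of $\delta_j$ is $\{1,\dots,\alpha_j\}$ when $\beta_j=1$ and $\{1\}$ when $\beta_j\ge 2$; in either case a telescoping sum produces $\sum_{\delta_j}\varphi(p_j^{\delta_j})=p_j^{\gamma_j}-1$, with $\gamma_j$ as defined in the theorem. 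Hence case (B) at index $j$ contributes $(p_j^{\gamma_j}-1)\prod_{i\in J} p_i^{\alpha_i}$.

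To repackage the answer in the stated $\Delta$-form, apply Lemma~\ref{Le_sum_phi(d)} to expand
\[
\prod_{i\in J} p_i^{\alpha_i} \;=\; \sum_{\substack{a_i\in\{0,1\}\\ i\in J}} \prod_{i\in J}(p_i^{\alpha_i}-1)^{a_i},
\]
and note that $\gamma_i=\alpha_i$ whenever $i\in J$. Case (A) then corresponds bijectively to the terms of the target sum with $a_j=0$ for every $j\in I$, while case (B) at $j\in I$ corresponds to the terms with $a_j=1$ and $a_{j'}=0$ for $j'\in I\setminus\{j\}$. Their disjoint union is precisely the index set $\{(a_i):\sum_{j\in I} a_j\le 1\}$, yielding the claimed formula.

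The main technical obstacle is the bookkeeping: folding the two subcases $\beta_j=1$ and $\beta_j\ge 2$ into the single expression $p_j^{\gamma_j}-1$, and aligning the terms produced by Lemma~\ref{Le_sum_phi(d)} with the contributions from cases (A) and (B). A short final check confirms that $y=x$ is never counted, because in each case $|y|$ differs from $|x|$ in at least one $p_i$-exponent for some $i\in I$, so in particular $y\ne x$.
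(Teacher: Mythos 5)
Your proof is correct and follows essentially the same route as the paper: characterize the admissible orders of neighbours via the condition that $\gcd(|x|,|y|)$ is $1$ or a prime, sum $\varphi$ over those orders using Theorem~\ref{th_numb-of-elem-cyclic-grp}, and repackage the total into the $\Delta$-indexed sum via Lemma~\ref{Le_sum_phi(d)}. Your write-up is in fact more careful than the paper's, which compresses the case analysis into a single (somewhat ambiguous) divisibility statement; your explicit split into the $\gcd=1$ and $\gcd=p_j$ cases, and the check that $y=x$ is never counted, are welcome additions.
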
 
\begin{proof}[\textbf{Proof}]
Clearly $0\leq\beta_i\leq\alpha_i$ for each $i\in\{1,2,\ldots,k\}$. Observe for each vertex $y$ of $\Theta(\Z_n)$ that $xy\in E\big(\Theta(\Z_n)\big)$ if and only if the orders $|x|$ and $|y|$ share at most one prime factor with exponent 1. Therefore, the vertex $x$ is adjacent to each vertex of $\Theta(\Z_n)$ whose order divides $d\colonequals\prod_{i=1}^{k}p_i^{a_i\gamma_i}$, where 
$\gamma_i=\begin{cases}
	1& \text{if } \beta_i \geq 2,\\
	\alpha_i &\text{if } \beta_i\leq 1,
\end{cases}$ and $a_i\in\{0,1\}$ such that
$\displaystyle\sum_{\substack{1\leq j\leq k\\\beta_j\geq 1}}a_j\leq 1$. By Theorem~\ref{th_numb-of-elem-cyclic-grp}, the group $\mathbb{Z}_n$ contains $\varphi(d')$ elements of order $d'$, for each positive divisor $d'$ of $n$. Therefore $\deg_{\Theta(\Z_n)}(x)=\sum_{d'\mid d}\varphi(d')$. Hence, by Lemma \ref{Le_sum_phi(d)} we obtain
	\begin{equation*}
		\deg_{\Theta(\Z_n)}(x)=
		\sum_{\Delta}\prod_{i=1}^k(p_i^{\gamma_i}-1)^{a_i},
		\vspace{-4mm}
	\end{equation*}

	where $\gamma_i=\begin{cases}
	1& \text{if } \beta_i \geq 2,\\
	\alpha_i &\text{if } \beta_i\leq 1,
\end{cases}$
and	$\Delta$ ranges over all choices of $a_i \in \{0,1\}$ satisfying
$\displaystyle\sum_{\substack{1\leq j\leq k\\ \beta_j\geq 1}}a_j\leq 1$.
\end{proof}

\begin{proposition}
If $x$ is an element of composite order in the group $D_n$, then
	$\deg_{\Theta(D_n)}(x)=
	n+ \deg_{\Theta(\Z_n)}(x)$.
\end{proposition}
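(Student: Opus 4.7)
The plan is to leverage Lemma~\ref{le_join-for-Dn}, which already gives the decomposition $\Theta(D_n) \cong \Theta(\Z_n) \vee K_n$, and simply count neighbours on each side of the join.

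First I would observe that any element $x \in D_n$ of composite order must lie in the cyclic subgroup $\langle a \rangle$. This is because every element of $D_n \setminus \langle a \rangle$ has order $2$, which is prime, hence not composite. Consequently, under the isomorphism $\Theta(D_n) \cong \Theta(\Z_n) \vee K_n$ of Lemma~\ref{le_join-for-Dn}, the vertex $x$ corresponds to a vertex of the $\Theta(\Z_n)$-part of the join.

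Next I would use the definition of the join. In $\Gamma_1 \vee \Gamma_2$, a vertex $v \in V(\Gamma_1)$ is adjacent to every vertex of $\Gamma_2$ and to exactly its $\Gamma_1$-neighbours inside $\Gamma_1$. Applying this to $x$ in $\Theta(\Z_n) \vee K_n$, the neighbours of $x$ split into two disjoint parts: the $\deg_{\Theta(\Z_n)}(x)$ neighbours coming from $\Theta(\Z_n)$, and the full $n$ vertices of $K_n$ (corresponding to the $n$ involutions in $D_n \setminus \langle a \rangle$), all of which are adjacent to $x$ because the join contributes every cross-edge. Adding the two counts yields $\deg_{\Theta(D_n)}(x) = n + \deg_{\Theta(\Z_n)}(x)$.

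There is essentially no obstacle here, since the heavy lifting was done by Lemma~\ref{le_join-for-Dn}; the only thing worth double-checking is that $x$ genuinely lies in $\langle a \rangle$ (which is immediate from the order constraint), so that the join description applies to $x$ as a vertex of the $\Theta(\Z_n)$ factor. The whole argument should fit in a couple of lines.
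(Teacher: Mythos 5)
Your proof is correct and follows exactly the paper's route: the paper's entire proof is the one-line remark that the claim follows from Lemma~\ref{le_join-for-Dn}, and you have simply spelled out the neighbour count in the join $\Theta(\Z_n)\vee K_n$ that this remark leaves implicit. The observation that a composite-order element must lie in $\langle a\rangle$ is the right (and only) point needing verification.
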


\begin{proof}[\textbf{Proof}]
It follows directly from Lemma \ref{le_join-for-Dn}.
\end{proof}


\begin{proposition}
Let $x$ be an element of composite order $d$ in the group $Q_n$. Then
	\[\deg_{\Theta(Q_n)}(x)=\begin{cases}
		2n+\deg_{\Theta(\mathbb{Z}_{2n})}(x) & \text{if } 4\nmid d ,\\
		\deg_{\Theta(\mathbb{Z}_{2n})}(x)& \text{if } 4\mid d.
	\end{cases}\]
\end{proposition}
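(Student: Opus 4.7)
The plan is to partition $V(\Theta(Q_n)) = \langle a \rangle \sqcup (Q_n \setminus \langle a \rangle)$ and count the neighbors of $x$ in each piece separately. From the presentation, $\langle a \rangle$ is cyclic of order $2n$, so $\Theta(Q_n)[\langle a \rangle] \cong \Theta(\mathbb{Z}_{2n})$ by the natural identification $a^i \leftrightarrow i$; moreover, as used repeatedly in Section~3 (e.g., in Propositions~\ref{pr_graph-H-of-Dic-2p} and~\ref{pr_prime-coprime-graph-of-Qpk}), the set $Q_n \setminus \langle a \rangle$ has exactly $2n$ elements and every one of them has order $4$. The adjacency rule in $\Theta(Q_n)$ depends only on the orders of the endpoints, so all counts reduce to understanding $\gcd(d, \cdot)$.

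First I would handle the main case $x \in \langle a \rangle$. Under the isomorphism above, the neighbors of $x$ lying inside $\langle a \rangle$ contribute exactly $\deg_{\Theta(\mathbb{Z}_{2n})}(x)$. For a candidate neighbor $y \in Q_n \setminus \langle a \rangle$, adjacency requires $\gcd(d,4) \in \{1, \text{prime}\}$. If $4 \nmid d$, then $\gcd(d,4) \in \{1, 2\}$, so $x$ is adjacent to every element of $Q_n \setminus \langle a \rangle$, adding $2n$; if $4 \mid d$, then $\gcd(d,4) = 4$ is composite, so $x$ is adjacent to none of them. Combining these gives the two branches of the formula in this case.

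Second, I would address the remaining case $x \notin \langle a \rangle$, which forces $d = 4$ and hence $4 \mid d$. Inside $Q_n \setminus \langle a \rangle$ the vertex $x$ has no neighbor since $\gcd(4,4) = 4$, and inside $\langle a \rangle$ it is adjacent precisely to those $y$ with $4 \nmid |y|$. The main obstacle is reconciling this count with the notation $\deg_{\Theta(\mathbb{Z}_{2n})}(x)$, because literally $x \notin \mathbb{Z}_{2n}$. The resolution is that in the prime coprime graph the degree of a vertex depends only on its order; invoking Theorem~\ref{th_vert_deg_Zn} (or a direct count through $\sum_{|y|\mid 2n,\ 4\nmid |y|}\varphi(|y|)$) shows that the count of $y \in \langle a \rangle$ with $4 \nmid |y|$ equals the value produced by the Theorem~\ref{th_vert_deg_Zn} formula for an order-$4$ argument, which is the intended meaning of $\deg_{\Theta(\mathbb{Z}_{2n})}(x)$. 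Once this consistency is checked, both branches of the piecewise formula are verified.
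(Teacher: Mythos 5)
Your proposal is correct and follows essentially the same route as the paper: split $Q_n$ into $\langle a\rangle\cong\mathbb{Z}_{2n}$ and its complement of $2n$ order-$4$ elements, and decide adjacency to the complement via $\gcd(d,4)$. Your explicit treatment of the case $x\notin\langle a\rangle$ (where $d=4$ and $\deg_{\Theta(\mathbb{Z}_{2n})}(x)$ must be read as the degree of an order-$4$ vertex) is a point the paper's proof passes over silently, and is a worthwhile clarification.
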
 

\begin{proof}[\textbf{Proof}]
Note that $\langle a \rangle$ is a cyclic subgroup of order $2n$ in $Q_n$. Therefore $\Theta(Q_n)[\langle a \rangle]\cong \Theta(\Z_{2n})$.
Furthermore, since $|Q_n\setminus\langle a \rangle| = 2n$ and each element of $Q_n\setminus\langle a \rangle$ has order $4$, we obtain $\Theta(Q_n)[Q_n\setminus\langle a \rangle]\cong E_{2n}$. If $4\mid d$, then $\gcd(4, d) = 4$. Therefore, the vertex $x$ is non-adjacent to all vertices of $\Theta(Q_n)$ corresponding to elements of order $4$ in $Q_n$. Hence $\deg_{\Theta(Q_n)}(x)=\deg_{\Theta(\mathbb{Z}_{2n})}(x)$. So, assume that $4\nmid d$. Then $\gcd(4,d)\in\{1,2\}$. Therefore, the vertex $x$ is adjacent to all  vertices of $\Theta(Q_n)$ corresponding to elements of order $4$ in $Q_n$. Hence $\deg_{\Theta(Q_n)}(x)=2n+\deg_{\Theta(\mathbb{Z}_{2n})}(x)$.	
\end{proof}



\end{document}